\title{On semipositivity theorems}
\author{Osamu Fujino and Taro Fujisawa}
\date{2017/11/10, version 0.23}
\subjclass[2010]{Primary 14D07; Secondary 14C30, 32J25, 32U25.}
\keywords{variation of Hodge structure, 
singular hermitian metric, nefness, Higgs bundle, system of Hodge bundles}
\address{Department of Mathematics, Graduate School of Science,
Osaka University, Toyonaka, Osaka 560-0043, Japan}
\email{fujino@math.sci.osaka-u.ac.jp}
\address{Department of Mathematics, School of Engineering, 
Tokyo Denki University, Tokyo, Japan}
\email{fujisawa@mail.dendai.ac.jp}
\newcommand{\Gr}[0]{\operatorname{Gr}}
\newtheorem{thm}{Theorem}[section]
\newtheorem{lem}[thm]{Lemma}
\newtheorem{prop}[thm]{Proposition}
\newtheorem{cor}[thm]{Corollary}
\theoremstyle{definition}
\newtheorem{defn}[thm]{Definition}
\newtheorem{rem}[thm]{Remark}
\newtheorem*{ack}{Acknowledgments}
\newtheorem{say}[thm]{}
\begin{document}
\bibliographystyle{amsalpha+}

\maketitle

\begin{abstract}
We generalize the Fujita--Zucker--Kawamata semipositivity 
theorem from the analytic viewpoint. 
\end{abstract}

\tableofcontents

\section{Introduction}\label{f-sec1}

The main purpose of this paper is 
to generalize the well-known 
Fujita--Zucker--Kawamata semipositivity 
theorem (see \cite[\S 4.~Semi-positivity]{kawamata1}, 
\cite[Theorem 2]{kawamata2}, 
\cite[Section 5]{fujino-fujisawa}, 
\cite[Theorem 3]{fujino-fujisawa-saito}, and 
\cite{fujisawa}) 
from the analytic viewpoint. 

\begin{thm}\label{f-thm1.1}
Let $X$ be a complex manifold and 
let $X_0\subset X$ be 
a Zariski open set such that $D=
X\setminus X_0$ is a normal crossing divisor on $X$. 
Let $V_0$ be a polarizable variation of $\mathbb R$-Hodge 
structure over $X_0$ with 
unipotent monodromies around $D$. 
Let $F^b$ be the canonical extension 
of the lowest piece of the Hodge 
filtration. 
Let $F^b\to \mathscr L$ be a quotient line bundle of $F^b$. 
Then the Hodge metric of $F^b$ induces 
a singular hermitian metric $h$ on $\mathscr L$ 
such that $\sqrt{-1}\Theta_h(\mathscr L)\geq 0$ and 
the Lelong number of $h$ is zero everywhere. 
\end{thm}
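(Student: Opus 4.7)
The plan is to show $\sqrt{-1}\Theta_h(\mathscr{L})\ge 0$ first on $X_0$ by the classical Hodge-metric curvature computation for the bottom Hodge piece, and then to extend the resulting plurisubharmonic local weight across $D$ by means of Schmid's asymptotic estimates, which under the unipotency hypothesis give polynomial-in-$\log|z_j|$ control of the Hodge metric of the canonical extension.

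Over $X_0$, I would pass to the associated system of Hodge bundles $(E,\theta)=\bigl(\bigoplus_p F^p/F^{p+1},\theta\bigr)$, with $\theta$ the Higgs field induced from the Gauss--Manin connection via Griffiths transversality. Since $V_0$ is a polarized variation, the associated Higgs bundle is harmonic, so Hitchin's equation $\Theta_h=-[\theta,\theta^*]$ holds on each graded summand. On the bottom summand $F^b$ one has $\theta^*|_{F^b}=0$, because $\theta^*$ sends $F^b$ into the (zero) graded piece $E^{b+1,\ast}$; consequently $\Theta_h|_{F^b}=-\theta^*\wedge\theta$. Expanding $\theta=\sum_\alpha\theta_\alpha\,dz_\alpha$ in local coordinates identifies $\sqrt{-1}\Theta_h|_{F^b}(v,v)$ with the $(1,1)$-form whose coefficient matrix is the Gram matrix $\{\langle\theta_\alpha v,\theta_\beta v\rangle\}$, which is positive semi-definite. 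Thus the Hodge metric on $F^b$ is Griffiths semi-positive on $X_0$, and since this property is inherited by any quotient bundle, the induced quotient metric on $\mathscr{L}|_{X_0}$ already satisfies $\sqrt{-1}\Theta_h(\mathscr{L})\ge 0$.

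Near a point of $D$, I would pick local coordinates $(z_1,\dots,z_n)$ with $D=\{z_1\cdots z_k=0\}$ and let $N_j$ be the nilpotent logarithms of the unipotent local monodromies. Schmid's Nilpotent Orbit Theorem, in its several-variable refinement due to Cattani--Kaplan--Schmid, provides a holomorphic frame of the canonical extension of $V$ in which the Hodge metric is comparable, up to multiplicative constants, to an explicit model metric built from the $N_j$; in particular, both the Hodge metric and its inverse are dominated by a power of $|\log|z_1\cdots z_k||$. Transferring the estimate to the sub-bundle $F^b$ and then to $\mathscr{L}$ via the dual embedding $\mathscr{L}^{-1}\hookrightarrow(F^b)^*$, for any local holomorphic section $s$ of $\mathscr{L}$ one obtains the bound $\bigl|\log|s|_h^2\bigr|=O\bigl(\log|\log|z||\bigr)$ near $D$.

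Combining these, the local weight $\varphi=-\log|s|_h^2$ is plurisubharmonic on $X_0$ and locally bounded near $D$ with $\log\log$-growth, so the classical Riemann extension theorem for plurisubharmonic functions across analytic subsets yields a unique psh extension to $X$, proving $\sqrt{-1}\Theta_h(\mathscr{L})\ge 0$ as currents on all of $X$. The same bound forces $\varphi(x)/\log|x-p|\to 0$ for every $p\in D$, so the Lelong number of $h$ is zero everywhere. The principal obstacle is the second step: deriving polynomial growth of the Hodge metric on an arbitrary quotient line bundle of $F^b$ from the Nilpotent Orbit Theorem requires the compatibility of the canonical extension with subquotients of the Hodge filtration, which is precisely where the unipotency hypothesis plays its essential role.
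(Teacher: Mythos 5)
Your plan follows the paper's overall strategy: semipositive curvature on $X_0$ from the Higgs/Hodge curvature identity, Cattani--Kaplan--Schmid asymptotics giving polynomial-in-$\log|z|$ bounds on the Hodge metric and its inverse (hence $\log\log$-growth of the local weight), and zero Lelong number from that growth. These pieces are essentially Lemma \ref{f-lem2.8}, the estimates surrounding \eqref{f-eq41}--\eqref{f-eq42}, and Lemma \ref{f-lem4.15} in the paper.

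The gap is in the extension step. You assert that $\varphi$ is ``locally bounded near $D$ with $\log\log$-growth'' and then invoke ``the classical Riemann extension theorem for plurisubharmonic functions across analytic subsets.'' These two statements are in tension: your own estimate $|\varphi|=O(\log(-\log|z|))$ has a right-hand side that diverges to $+\infty$ along $D$, so $\varphi$ is \emph{not} locally bounded above near $D$. The Grauert--Remmert removable-singularity theorem for plurisubharmonic functions across an analytic hypersurface genuinely requires local boundedness from above; without it, the upper-semicontinuous extension can acquire negative mass along $D$ (for instance $-\log|z_1|$ is psh on $\Delta^n\setminus\{z_1=0\}$ and unbounded above, and its extension is not psh). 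The $\log\log$ growth is of course much milder than $-\log|z_1|$, and the conclusion you want is in fact true, but it is not a consequence of the theorem you cite. A clean fix exists and you need to make it: for $\delta>0$ set $v_\delta=\varphi+\delta\log|z_1\cdots z_k|$ near a point where $D=\{z_1\cdots z_k=0\}$; the sublogarithmic bound makes $v_\delta$ locally bounded above, Grauert--Remmert applies to $v_\delta$, and letting $\delta\to 0$ gives $\sqrt{-1}\partial\bar\partial\bar\varphi\geq -\tfrac12\sqrt{-1}\Theta_g(\mathscr L)$ as currents. You should also verify $\varphi\in L^1_{\mathrm{loc}}$ near $D$ so that $\partial\bar\partial\bar\varphi$ is defined at all (cf.\ Lemmas \ref{f-lem4.5}--\ref{f-lem4.6}).

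For comparison, the paper handles this step quite differently: rather than using only the growth of $\varphi_0$, it shows (Lemma \ref{f-lem4.7}, via Koll\'ar's estimates) that $\partial\varphi_0$ and $\bar\partial\partial\varphi_0$ are ``almost bounded'' in the Poincar\'e frame after suitable blow-ups, and then carries out an explicit integration by parts along a carefully chosen sequence of shrinking boundary cycles (Lemmas \ref{f-lem4.9}--\ref{f-lem4.14}) to identify $\sqrt{-1}\Theta_h(\mathscr L)$ with the trivial extension of the smooth form $\sqrt{-1}\Theta_{h_0}(\mathscr L|_{X_0})$ by improper integrals. This is heavier than the $\delta$-trick but yields more precise information (the current acquires no mass along $D$). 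Once your gap is closed, your route is a genuine and shorter alternative for Theorem \ref{f-thm1.1} itself; but as written, the appeal to a ``classical'' extension theorem is where the argument breaks.
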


As a direct consequence of Theorem \ref{f-thm1.1}, we have: 

\begin{cor}[cf.~\cite{kawamata3}]\label{f-cor1.2}
Let $X$ be a complex manifold and 
let $X_0\subset X$ be a Zariski open set such that $D=
X\setminus X_0$ is a normal crossing divisor on $X$. 
Let $V_0$ be a polarizable 
variation of $\mathbb R$-Hodge structure over $X_0$ with 
unipotent monodromies around $D$. 
Let $F^b$ be the canonical extension 
of the lowest piece of the Hodge 
filtration. 
Then $\mathscr O_{\mathbb P_X(F^b)}(1)$ has 
a singular hermitian metric $h$ such that 
$\sqrt{-1}\Theta_h(\mathscr O_{\mathbb P_X(F^b)}(1))
\geq 0$ and that the Lelong number of $h$ is 
zero everywhere. Therefore, $F^b$ is nef 
in the usual sense when $X$ is projective. 
\end{cor}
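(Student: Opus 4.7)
The plan is to pull the entire setting back to the projective bundle and then apply Theorem~\ref{f-thm1.1} there. Write $\pi\colon Y := \mathbb P_X(F^b) \to X$ for the projection, set $Y_0 := \pi^{-1}(X_0)$, and set $E := \pi^{-1}(D)$. Then $Y$ is a complex manifold, $E$ is a normal crossing divisor on $Y$, and $Y \setminus E = Y_0$. Because $\pi$ is a smooth projective morphism with simply connected fibres, $\pi^{*}V_0$ is a polarizable variation of $\mathbb R$-Hodge structure on $Y_0$ whose monodromy around each irreducible component of $E$ coincides with that of $V_0$ around the corresponding component of $D$, and is therefore unipotent.

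I next identify the canonical extension of the lowest Hodge piece of $\pi^{*}V_0$ with the pullback $\pi^{*}F^b$. Locally this reduces to a polydisk mapping smoothly to a polydisk with matching normal crossing coordinates, and in the unipotent case the nilpotent orbit description of the Deligne extension commutes with such pullbacks. With this identification in place, the tautological surjection $\pi^{*}F^b \twoheadrightarrow \mathscr O_{\mathbb P_X(F^b)}(1)$ exhibits $\mathscr O_{\mathbb P_X(F^b)}(1)$ as a rank-one quotient of the canonical extension of the lowest Hodge piece on $Y$. Applying Theorem~\ref{f-thm1.1} to the data $(Y, E, \pi^{*}V_0)$ then produces a singular hermitian metric $h$ on $\mathscr O_{\mathbb P_X(F^b)}(1)$ with $\sqrt{-1}\Theta_h(\mathscr O_{\mathbb P_X(F^b)}(1)) \geq 0$ and vanishing Lelong number everywhere, which is the first assertion of the corollary.

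If in addition $X$ is projective, then so is $Y$. By Demailly's regularization theorem, any line bundle on a projective manifold that carries a singular hermitian metric with semipositive curvature current and identically zero Lelong numbers is nef; applied to $\mathscr O_{\mathbb P_X(F^b)}(1)$ this yields nefness of $F^b$ in the usual sense. The main technical point I expect to write out carefully is the compatibility of the Deligne canonical extension with the smooth pullback $\pi^{*}$ in the second step; once this is established, everything else is a formal invocation of Theorem~\ref{f-thm1.1} together with the standard nefness criterion via singular metrics.
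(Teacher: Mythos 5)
Your proposal is correct and follows essentially the same route as the paper: pull back along $\pi\colon \mathbb P_X(F^b)\to X$, observe that $\pi^*F^b$ is the canonical extension of the lowest Hodge piece of $\pi^*V_0$, apply Theorem~\ref{f-thm1.1} to the tautological quotient $\pi^*F^b\twoheadrightarrow \mathscr O_{\mathbb P_X(F^b)}(1)$, and invoke Proposition~\ref{f-prop3.5} for nefness in the projective case. You supply more detail than the paper on the unipotency of the pulled-back monodromy and the compatibility of the Deligne extension with smooth pullback, but the argument is the same.
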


\begin{rem}\label{f-rem1.3}
There exists a quite short published proof of Corollary \ref{f-cor1.2} 
(see the proof of \cite[Theorem 1.1]{kawamata3}). 
However, we have been unable to follow it. 
We also note that the arguments in \cite[\S4.~Semi-positivity]{kawamata1} 
contain various troubles. For the details, see 
\cite[4.6.~Remarks]{fujino-fujisawa-saito}.  
\end{rem}

\begin{rem}\label{f-rem1.4}
When $X$ is projective and $V_0$ is geometric in 
Corollary \ref{f-cor1.2}, the nefness of $F^b$ has already 
played important roles in the Iitaka program and 
the minimal model program for higher-dimensional 
complex algebraic varieties. 
\end{rem}

More generally, we can prove: 

\begin{thm}\label{f-thm1.5}
Let $X$ be a complex manifold and 
let $X_0\subset X$ be 
a Zariski open set such that $D=
X\setminus X_0$ is a normal crossing divisor on $X$. 
Let $V_0$ be a polarizable variation of $\mathbb R$-Hodge 
structure over $X_0$ with 
unipotent monodromies around $D$. 
If $\mathscr M$ is a holomorphic line subbundle of the associated 
system of Hodge bundles $\Gr_F^\bullet \mathscr V=\bigoplus _p \Gr _F^p 
\mathscr V$ which 
is contained in the kernel of the Higgs field 
$$
\theta: \Gr_F ^\bullet\mathscr 
V\to \Omega^1_X(\log D)\otimes_{\mathscr O_{X}} \Gr _F^\bullet \mathscr V, 
$$ then 
the Hodge metric induces 
a singular hermitian metric $h$ on 
its dual $\mathscr M^{\vee}$ such that 
$\sqrt{-1}\Theta_h(\mathscr M^{\vee})\geq 0$ and 
that the Lelong number of $h$ is 
zero everywhere. 
\end{thm}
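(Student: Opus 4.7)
The plan is to adapt the argument sketched for Theorem~\ref{f-thm1.1}, combining a Higgs-bundle curvature computation on $X_0$ with the asymptotic estimates available for the Hodge metric of a variation with unipotent monodromy. Write $h_H$ for the Hodge metric on $\Gr_F^\bullet \mathscr V$; over $X_0$, the pair $(\Gr_F^\bullet\mathscr V,\theta)$ is the system of Hodge bundles underlying $V_0$, and $h_H$ is its tautological harmonic metric. For a local holomorphic frame $e$ of $\mathscr M$, the hypothesis $\theta(e)=0$ combined with Hitchin's identity
$$
\sqrt{-1}\,\Theta(h_H)+\sqrt{-1}[\theta,\theta^{*}_{h_H}]=0
$$
gives $\sqrt{-1}\langle\Theta(h_H)e,e\rangle=-\|\theta^{*}_{h_H}e\|^{2}$, a semi-negative $(1,1)$-form. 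Griffiths' subbundle formula for $\mathscr M\subset\Gr_F^\bullet\mathscr V$ then subtracts a further semi-positive contribution involving the second fundamental form $\beta$, so that
$$
\sqrt{-1}\,\partial\bar\partial\log\|e\|_{h_H}^{2}=\frac{\|\theta^{*}_{h_H} e\|^{2}+\|\beta e\|^{2}}{\|e\|_{h_H}^{2}}\geq 0.
$$
Hence $\varphi:=\log\|e\|_{h_H}^{2}$ is plurisubharmonic on $X_0$, and the rule $\|f\,e^{\vee}\|_{h}^{2}:=|f|^{2}e^{-\varphi}$ defines a smooth hermitian metric $h$ on $\mathscr M^{\vee}|_{X_{0}}$ with $\sqrt{-1}\Theta_{h}\geq 0$.

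To promote $h$ to a singular hermitian metric on $\mathscr M^{\vee}$ over all of $X$, the next step is to invoke the asymptotic norm estimates of Schmid and their strengthenings by Cattani--Kaplan--Schmid, available under the unipotent monodromy hypothesis. In local coordinates $(z_1,\dots,z_n)$ on $X$ with $D=\{z_1\cdots z_k=0\}$, these should supply a polylogarithmic upper bound
$$
\|e\|_{h_H}^{2}(z)\leq C\Bigl(\log\tfrac{1}{|z_{1}\cdots z_{k}|}\Bigr)^{N}
$$
on a punctured polydisk, for any holomorphic section $e$ of $\mathscr M\subset\ker\theta$ that extends across $D$. Consequently $\varphi\leq N\log\log\tfrac{1}{|z_1\cdots z_k|}+O(1)$, and in particular $\varphi\leq \varepsilon\log\tfrac{1}{|z_i|}+C_{\varepsilon}$ for every $\varepsilon>0$. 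This sublogarithmic upper bound places $\varphi$ in $L^{1}_{\mathrm{loc}}$ near $D$, so its upper semi-continuous regularization extends $\varphi$ to a plurisubharmonic function on $X$. Patching these local extensions produces the desired singular hermitian metric $h$ on $\mathscr M^{\vee}$ with $\sqrt{-1}\Theta_{h}\geq 0$ on $X$, and the same bound forces the Lelong number of $h$ to vanish at every point of $D$.

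The principal technical obstacle lies in the polylogarithmic bound of the second step: it is not automatic from the Higgs-bundle theory on $X_0$ and depends on the precise asymptotic description of the Hodge metric near $D$, where both the unipotent monodromy hypothesis and the condition $\mathscr M\subset\ker\theta$ play essential roles. The same type of boundary estimate is the analytic heart of Theorem~\ref{f-thm1.1}, and my expectation is that the Hodge-theoretic input required there can be adapted, with modifications to account for the higher graded pieces, to cover the present more general statement; once that input is in hand, the remainder reduces to standard pluripotential theory.
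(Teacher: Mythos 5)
Your curvature computation on $X_0$ is correct and is essentially Lemma~\ref{f-lem2.9} of the paper: Hitchin's identity plus Griffiths' second fundamental form formula for $\mathscr M\subset\ker\theta\subset\Gr_F^\bullet\mathscr V$ gives $\sqrt{-1}\partial\bar\partial\log\|e\|^2_{h_H}\geq 0$ on $X_0$, so the dual metric on $\mathscr M^\vee$ is semipositively curved there. Your extension argument via the removable singularity theorem for plurisubharmonic functions also works in principle, granted suitable boundary asymptotics. But the boundary asymptotics you invoke go in the \emph{wrong direction}, and that is precisely where the heart of the matter lies.

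You claim a polylogarithmic \emph{upper} bound $\|e\|^2_{h_H}\leq C\bigl(\log\tfrac{1}{|z_1\cdots z_k|}\bigr)^N$ and, from it, an upper bound $\varphi=\log\|e\|^2_{h_H}\leq O(\log\log)$. An upper bound on $\varphi$ is indeed what is used to show that the usc regularization of $\varphi$ is plurisubharmonic across $D$. But the Lelong number of the metric $h$ on $\mathscr M^\vee$ has weight $\varphi/2$, so $\nu(h,x)=\tfrac{1}{2}\liminf_{z\to x}\varphi(z)/\log|z-x|$, and since $\log|z-x|\to-\infty$ and $\nu\geq 0$ automatically, what must be ruled out is $\varphi\to-\infty$ too fast. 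That requires a \emph{lower} bound $\varphi\geq -O(\log\log)$, equivalently $\|e\|^2_{h_H}\geq \bigl(\log\tfrac{1}{|z_1\cdots z_k|}\bigr)^{-N}/C$. Your upper bound, true as it is for a section of a subbundle (norms only decrease under restriction and quotient), contributes nothing toward Lelong number zero. Moreover, obtaining the lower bound directly from the Cattani--Kaplan--Schmid estimates for a section $e$ of an arbitrary line subbundle of $\Gr_F^\bullet\mathscr V$ is not elementary: $e=\sum f_i\bar e_i$ in a distinguished frame $\bar e_i$, and even if each $\|\bar e_i\|$ is controlled from above and below by polylogarithms, cancellations between the $\bar e_i$ (which are far from orthonormal near $D$) can make $\|e\|$ much smaller.

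The paper's solution is to dualize before applying the boundary estimates. From the inclusions $F^{p+1}\subset F^p\subset\mathscr V$ one gets $\Gr_F^\bullet\mathscr V\hookrightarrow\bigoplus_p\mathscr V/F^p=:\mathscr Q$, hence $\mathscr M\hookrightarrow\mathscr Q$; dualizing, $\mathscr M^\vee$ is a \emph{quotient} line bundle of $\mathscr Q^\vee$, and $\mathscr Q^\vee$ is a \emph{subbundle} of $\bigoplus\mathscr V^\vee$, the canonical extension of the dual variation $\bigoplus V_0^\vee$. This reproduces exactly the geometry of Theorem~\ref{f-thm1.1} (a quotient line bundle of a subbundle of a canonical extension), so the entire analytic apparatus of Section~\ref{f-sec4} --- the CKS bound \eqref{f-eq41} applied to a lift of a local frame of $\mathscr M^\vee$ into $\bigoplus\mathscr V^\vee$, the local integrability of Lemma~\ref{f-lem4.5}, the almost-boundedness and Stokes arguments of Lemmas~\ref{f-lem4.7}--\ref{f-lem4.14}, and the Lelong number computation of Lemma~\ref{f-lem4.15} --- applies verbatim. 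In particular, because $\mathscr M^\vee$ sits as a quotient, the CKS \emph{upper} bound on $\tilde q_0$ automatically yields an \emph{upper} bound on $\tilde h_0(e^\vee,e^\vee)=e^{-\varphi}$, i.e.\ precisely the lower bound on $\varphi$ that your argument is missing. You should incorporate this dualization step (or supply an independent two-sided asymptotic estimate for the Hodge metric on $\Gr_F^\bullet\mathscr V$ restricted to a subbundle), as the Lelong number conclusion does not follow from the bound you state.

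Two smaller points. First, ``sublogarithmic upper bound places $\varphi$ in $L^1_{\mathrm{loc}}$'' is imprecise: local integrability of a psh function near $D$ is automatic once it is not identically $-\infty$, and what the upper bound buys is that the usc regularization across $D$ remains psh; the two-sided bound (as in Lemma~\ref{f-lem4.5}) is the clean statement. Second, the paper does not merely assert that the regularized $\varphi$ is quasi-psh; it proves via Lemmas~\ref{f-lem4.9}--\ref{f-lem4.14} that the curvature current equals the trivial extension of the smooth form from $X_0$, which is a sharper statement and is what actually makes the Lelong number argument rigorous in the multi-variable setting. Your sketch implicitly relies on this identification without establishing it.
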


For the details of the Higgs field $
\theta: \Gr_F ^\bullet\mathscr 
V\to \Omega^1_X(\log D)\otimes_{\mathscr O_{X}} \Gr _F^\bullet \mathscr V$ 
in Theorem \ref{f-thm1.5}, 
see Definition \ref{f-def2.7} below. 

As a direct easy consequence of Theorem \ref{f-thm1.5}, 
we obtain: 

\begin{cor}[{\cite{zuo} and \cite[Theorem 1.8]{brunebarbe}}]\label{f-cor1.6}
Let $X$ be a complex manifold and 
let $X_0\subset X$ be 
a Zariski open set such that $D=
X\setminus X_0$ is a normal crossing divisor on $X$. 
Let $V_0$ be a polarizable variation of $\mathbb R$-Hodge 
structure over $X_0$ with 
unipotent monodromies around $D$. 
If $A$ is a holomorphic subbundle of the associated 
system of Hodge bundles $\Gr_F^\bullet \mathscr V
=\bigoplus _p \Gr _F^p \mathscr V$ which 
is contained in the kernel of the Higgs field 
$$
\theta: \Gr_F^\bullet \mathscr 
V\to \Omega^1_X(\log D)\otimes \Gr _F^\bullet \mathscr V, 
$$ 
then $\mathscr O_{\mathbb P_X(A^{\vee})}(1)$ 
has a singular hermitian metric $h$ such that 
$\sqrt{-1}\Theta _h (\mathscr O_{\mathbb P_X(A^{\vee})}
(1))\geq 0$ and that the Lelong number of 
$h$ is zero everywhere. 
Therefore, the dual vector bundle $A^{\vee}$ is nef in the usual sense when 
$X$ is projective. 
\end{cor}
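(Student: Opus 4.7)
The plan is to reduce to Theorem \ref{f-thm1.5} by projectivizing. Consider the natural projection $\pi\colon Y:=\mathbb P_X(A^{\vee})\to X$. Since $\pi$ is a smooth morphism (a projective bundle), $Y$ is again a complex manifold and $D_Y:=\pi^{-1}(D)$ is a simple normal crossing divisor on $Y$. Let $Y_0:=\pi^{-1}(X_0)=Y\setminus D_Y$. I would pull the variation of Hodge structure back to $Y_0$: the bundle $\pi^{*}V_0$ is a polarizable variation of $\mathbb R$-Hodge structure on $Y_0$, and the monodromies around the components of $D_Y$ are pullbacks of the monodromies around the components of $D$, hence remain unipotent. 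Functoriality of the canonical extension (Deligne's construction) gives that the canonical extension of $\pi^{*}V_0\otimes\mathscr O_{Y_0}$ is naturally identified with $\pi^{*}\mathscr V$, and the associated system of Hodge bundles satisfies $\Gr_F^{\bullet}(\pi^{*}\mathscr V)=\pi^{*}\Gr_F^{\bullet}\mathscr V$.

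The next step is to check that the Higgs field on $Y$ is the pullback of the Higgs field on $X$, composed with the natural map $\pi^{*}\Omega^1_X(\log D)\to\Omega^1_Y(\log D_Y)$. Once this is in place, the pullback $\pi^{*}A\subset \pi^{*}\Gr_F^{\bullet}\mathscr V$ is a holomorphic subbundle lying in the kernel of the Higgs field on $Y$, because the Higgs field is $\mathscr O_X$-linear and the kernel condition is preserved under pullback.

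Now I would use the tautological inclusion on the projective bundle: in Grothendieck's convention, one has the surjection $\pi^{*}A^{\vee}\twoheadrightarrow \mathscr O_{\mathbb P_X(A^{\vee})}(1)$, which dualizes to a line subbundle
\[
\mathscr M:=\mathscr O_{\mathbb P_X(A^{\vee})}(-1)\hookrightarrow \pi^{*}A\subset \pi^{*}\Gr_F^{\bullet}\mathscr V.
\]
Thus $\mathscr M$ is a holomorphic line subbundle of the system of Hodge bundles on $Y$ which is contained in the kernel of the Higgs field. Applying Theorem \ref{f-thm1.5} on $(Y,D_Y)$ with this $\mathscr M$, we obtain a singular hermitian metric $h$ on $\mathscr M^{\vee}=\mathscr O_{\mathbb P_X(A^{\vee})}(1)$ with $\sqrt{-1}\Theta_h\geq 0$ and with vanishing Lelong numbers everywhere, which is exactly the first assertion of the corollary.

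For the final sentence, when $X$ is projective then so is $Y$, and it is a standard fact in pluripotential theory (essentially due to Demailly) that a line bundle on a projective manifold carrying a singular hermitian metric with semipositive curvature current and identically vanishing Lelong numbers is nef; applied to $\mathscr O_{\mathbb P_X(A^{\vee})}(1)$ this gives the nefness of $A^{\vee}$ by the very definition of nefness for a vector bundle. The only step requiring real care is verifying the functoriality statements of the second paragraph, namely that the canonical extension, the Hodge filtration, and the Higgs field behave well under the smooth pullback $\pi$; everything else is formal once Theorem \ref{f-thm1.5} is granted.
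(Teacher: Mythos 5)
Your argument is correct and is essentially the paper's own proof: pull $V_0$ back along $\pi\colon \mathbb P_X(A^{\vee})\to X$, observe that the tautological line subbundle $\mathscr O_{\mathbb P_X(A^{\vee})}(-1)\hookrightarrow \pi^{*}A$ lies in the kernel of the pulled-back Higgs field, and apply Theorem~\ref{f-thm1.5}. The paper leaves the functoriality of the canonical extension and Higgs field under the smooth map $\pi$ implicit, but your more careful spelling-out of those checks is exactly what is needed.
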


Corollary \ref{f-cor1.6} is an analytic version 
of \cite[Theorem 1.8]{brunebarbe} (see also \cite{fujisawa}). 
For some generalizations of \cite[Theorem 1.8]{brunebarbe} 
from the Hodge module theoretic viewpoint, 
see \cite[Theorem 18.1]{popa-schnell} and 
\cite[Theorem A]{popa-wu}. 
For a very recent development on semipositivity theorems from 
the theory of Higgs bundles, see \cite{brunebarbe2}. 

\begin{rem}\label{f-rem1.7} 
Let $a$ be the integer such that $F^{a+1}_0\subsetneq F^a_0=\mathscr 
V_0$. 
Then, in Corollary \ref{f-cor1.6}, $\Gr^a_F\mathscr V$ is a holomorphic 
subbundle of $\Gr_F^\bullet\mathscr V$ and is contained in 
the kernel of $\theta$. 
Therefore, we can use Corollary \ref{f-cor1.6} 
for $A=\Gr^a_F\mathscr V$. 
By considering the dual Hodge structure in Corollary \ref{f-cor1.6} 
and putting $A=\Gr^a_F\mathscr V$, 
Corollary \ref{f-cor1.6} is also a generalization of 
the Fujita--Zucker--Kawamata 
semipositivity theorem (see, for example, 
\cite[Remark 3.15]{fujino-fujisawa}). 
Of course, by considering the dual Hodge structure, 
Theorem \ref{f-thm1.5} contains Theorem \ref{f-thm1.1} 
as a special case. 
\end{rem}

Our proof in this paper 
heavily depends on \cite{kollar}, which 
is based on \cite{cks}, and Demailly's approximation result 
for quasi-plurisubharmonic functions on complex manifolds 
(see \cite{demailly1} and \cite{demailly}). 

\begin{rem}[Singular hermitian metrics on vector bundles]\label{f-rem1.8} 
We note that our results explained above are local analytic. 
Therefore, we can easily see that the Hodge metric 
of $F^b$ in Theorem \ref{f-thm1.1} is a semipositively curved singular 
hermitian metric in the sense of P\u aun--Takayama (see \cite[Definition 2.3.1]
{paun-takayama} and \cite[Lemma 18.2]{hps}). 
Moreover, in Corollary \ref{f-cor1.6}, the induced metric on $A$ is a 
seminegatively 
curved singular hermitian metric in the sense of P\u aun--Takayama 
(see \cite[Definition 2.3.1]{paun-takayama} and 
\cite[Lemma 18.2]{hps}). 
For the details of singular hermitian metrics on vector bundles and some related 
topics, 
see \cite{paun-takayama} (see also \cite{hps} and \cite{brunebarbe}). 
\end{rem}

\begin{ack}
The first author was partially supported by JSPS KAKENHI Grant 
Numbers JP2468002, JP16H03925, JP16H06337. 
He thanks Professor Dano Kim whose question made him 
consider the problems discussed in this paper. 
Moreover, he pointed out an ambiguity in a preliminary version of 
this paper.  
The authors thank Professor Shin-ichi Matsumura for 
answering their questions and giving them some useful comments 
on singular hermitian metrics on vector bundles. 
Finally, they also thank the referee for comments. 
\end{ack}

\section{Preliminaries}\label{f-sec2}
In this section, we collect some basic 
definitions and results. 

\begin{say}[Singular hermitian metrics, multiplier ideal sheaves, 
and so on]\label{f-say2.1}
Let us recall some basic definitions and facts about singular hermitian 
metrics and plurisubharmonic functions. For the details, 
see \cite[(1.4), (3.12), (5.4), and so on]{demailly}. 

\begin{defn}[Singular hermitian metrics and curvatures]\label{f-def2.2}
Let $\mathscr L$ be a holomorphic line bundle on a 
complex manifold $X$. 
A {\em{singular hermitian metric}} $h$ on $\mathscr L$ is a metric 
which is given in every trivialization $\theta: \mathscr L|_U
\simeq U\times \mathbb C$ by 
$$
|\!| \xi |\!|_h=|\theta(\xi)|e^{-\varphi(x)}, 
\quad x\in U, \ \xi \in \mathscr L_x, 
$$
where $\varphi\in L^1_{\mathrm{loc}}(U)$ is an arbitrary function, called 
the {\em{weight}} of the metric with 
respect to the trivialization $\theta$. 
Note that $L^1_{\mathrm{loc}}(U)$ is the space of locally 
integrable functions on $U$. 
The {\em{curvature}} $\Theta_h(\mathscr L)$ 
of a singular hermitian metric $h$ on $\mathscr L$ is defined by 
$$
\Theta _h(\mathscr L):=2\partial \overline \partial \varphi, 
$$ 
where $\varphi$ is a weight function and $\partial \overline 
\partial \varphi$ is taken in the sense of 
currents.  
It is easy to see that the right hand side does not 
depend on the choice of trivializations. 
Therefore, we get a global closed $(1, 1)$-current $\Theta_h(\mathscr L)$ 
on $X$. In this paper, $\sqrt{-1}
\Theta_h(\mathscr L)\geq 0$ means that 
$\sqrt{-1}\Theta_h(\mathscr L)$ is positive 
in the sense of currents.  

Let $\mathscr L$ be a holomorphic line bundle on a smooth 
projective variety $X$. 
Then it is well known that there exists a singular hermitian 
metric $h$ on $\mathscr L$ with 
$\sqrt{-1}\Theta_h(\mathscr L)\geq 0$ if and 
only if $\mathscr L$ is pseudoeffective (see 
\cite[(6.17) Theorem (c)]{demailly}).  
\end{defn}

\begin{defn}[(Quasi-)plurisubharmonic functions]\label{f-def2.3}
A function $\varphi:U\to [-\infty, \infty)$ defined on an 
open set $U\subset \mathbb C^n$ is called 
{\em{plurisubharmonic}} if 
\begin{itemize}
\item[(i)] $\varphi$ is upper semicontinuous, and 
\item[(ii)] for every complex line $L\subset \mathbb C^n$, 
$\varphi|_{U\cap L}$ is subharmonic on $U\cap L$, 
that is, for every $a\in U$ and $\xi \in \mathbb C^n$ satisfying 
$|\xi|<d(a, U^c)=\inf \{|a-x| \, | \, x\in U^c\}$, the function 
$\varphi$ satisfies the mean inequality 
$$
\varphi(a)\leq \frac{1}{2\pi}\int ^{2\pi}_0 \varphi(a+e^{i\theta}
\xi)d\theta. 
$$ 
\end{itemize}

Let $X$ be an $n$-dimensional complex manifold. 
A function $\varphi:X\to [-\infty, \infty)$ is said to 
be {\em{plurisubharmonic}} 
if there exists an open cover $X=\bigcup _{i\in I}U_i$ such that 
$\varphi|_{U_i}$ is plurisubharmonic on $U_i$ ($\subset \mathbb C^n$) for every 
$i$. 
A {\em{quasi-plurisubharmonic}} function is a  function $\varphi$ which 
is locally equal to the sum of a plurisubharmonic function and of a smooth 
function. 

Let $\varphi$ be a quasi-plurisubharmonic function on a complex 
manifold $X$. 
Then the {\em{multiplier ideal sheaf}} 
$\mathscr J(\varphi)\subset \mathscr O_X$ is defined by 
$$
\Gamma (U, \mathscr J (\varphi))=\{ f\in 
\mathscr O_X(U) \, |\,  |f|^2e^{-2\varphi}\in L^1_{\mathrm{loc}}(U)\}
$$ 
for every open set $U\subset X$. 
It is well known that $\mathscr J(\varphi)$ is a coherent ideal sheaf on $X$. 
\end{defn}

\begin{defn}[Lelong numbers]\label{f-def2.4}
Let $\varphi$ be a quasi-plurisubharmonic function 
on $U$ ($\subset \mathbb C^n$). 
The Lelong number $\nu(\varphi, x)$ 
of $\varphi$ at $x\in U$ is defined as follows: 
$$
\nu(\varphi, x)=\underset{z\to x}{\liminf} \frac{\varphi(z)}
{\log |z-x|}.  
$$ 
It is well known that $\nu (\varphi, x)\geq 0$. 
\end{defn}

In this paper, we will implicitly use the following easy lemma 
repeatedly. 

\begin{lem}\label{f-lem2.5}
Let $\mathscr L$ be a holomorphic line bundle on a complex manifold $X$. 
Let $h=ge^{-2\varphi}$ be a singular hermitian metric on $\mathscr L$, where 
$g$ is a smooth hermitian metric on $\mathscr L$ and $\varphi$ is a locally 
integrable function on $X$. 
We assume that $\sqrt{-1}\Theta_h(\mathscr L)\geq 0$. 
Then there exists a quasi-plurisubharmonic function $\psi$ on $X$ such that 
$\varphi$ coincides with $\psi$ almost everywhere. 
In this situation, we put $\mathscr J(h)=\mathscr J(\psi)$. 
Moreover, 
we simply say the Lelong number of $h$ to denote 
the Lelong number of $\psi$ if there is no risk of confusion. 
\end{lem}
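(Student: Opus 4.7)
The plan is to work locally first, reduce the curvature hypothesis to positivity of $\sqrt{-1}\partial\bar\partial$ of a single locally integrable function, invoke the classical local existence of a plurisubharmonic potential for a closed positive $(1,1)$-current, and then glue.

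Step one is local. On a trivializing open set $U \subset X$, write the smooth metric as $g = e^{-2\varphi_g}$ with $\varphi_g \in C^\infty(U)$. Then in this trivialization the weight of $h$ is $\varphi_g + \varphi$, and Definition \ref{f-def2.2} gives $\Theta_h(\mathscr L)|_U = 2\,\partial\bar\partial(\varphi_g + \varphi)$ as currents. The hypothesis $\sqrt{-1}\Theta_h(\mathscr L) \geq 0$ therefore reads
\[
\sqrt{-1}\,\partial\bar\partial(\varphi_g + \varphi) \;\geq\; 0 \quad \text{on } U,
\]
in the sense of currents. In particular, $u := \varphi_g + \varphi \in L^1_{\mathrm{loc}}(U)$ defines a closed positive $(1,1)$-current $T := \sqrt{-1}\,\partial\bar\partial u$.

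Step two is to invoke the classical fact (Lelong): after shrinking $U$ to, say, a polydisc, there exists a plurisubharmonic function $v$ on $U$ with $\sqrt{-1}\,\partial\bar\partial v = T$. Applied to our $u$, this gives $\partial\bar\partial(u - v) = 0$ as a distribution on $U$. Hence $u - v$ is pluriharmonic in the distributional sense, so by elliptic regularity it coincides almost everywhere with a smooth (real-analytic) pluriharmonic function $\eta$ on $U$. Setting
\[
\psi_U \;:=\; v + \eta - \varphi_g,
\]
we obtain a function on $U$ that is quasi-plurisubharmonic (since $v$ is psh and $\eta - \varphi_g$ is smooth) and satisfies $\psi_U = \varphi$ almost everywhere on $U$.

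Step three is to glue. If $\psi_U$ and $\psi_{U'}$ are constructed as above on two charts, then on $U \cap U'$ both equal $\varphi$ almost everywhere, hence agree almost everywhere. But upper semicontinuous representatives of quasi-psh functions are determined pointwise by an essential $\limsup$ over any set of full measure, so two quasi-psh functions that agree a.e.\ agree everywhere. Thus the $\psi_U$'s patch to a global quasi-psh $\psi$ on $X$ with $\psi = \varphi$ a.e., as required.

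The only nontrivial input is the local existence of a psh potential for a closed positive $(1,1)$-current, and the pluriharmonic regularity for distributions annihilated by $\partial\bar\partial$; both are standard pluripotential theory (see, e.g., the references to \cite{demailly} already cited in the paper). The remaining work — the explicit translation $\Theta_h = \Theta_g + 2\partial\bar\partial\varphi$ and the patching via upper semicontinuity — is routine.
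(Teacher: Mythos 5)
The paper states Lemma \ref{f-lem2.5} without proof, treating it as a standard fact from pluripotential theory, so there is no argument of the authors' to compare yours against. Your proof is correct and supplies the missing details. One remark: your route through a separately constructed psh potential $v$ for the current $T=\sqrt{-1}\,\partial\bar\partial u$, followed by the observation that $u-v$ is pluriharmonic, is a slight detour. The result you need is exactly the classical regularity statement that a locally integrable function $u$ with $\sqrt{-1}\,\partial\bar\partial u\geq 0$ in the sense of currents coincides almost everywhere with a (unique) plurisubharmonic function; invoking that directly for $u=\varphi_g+\varphi$, and then subtracting $\varphi_g$, gives $\psi_U$ in one step with no need to manufacture $v$ or to discuss $\partial\bar\partial$-exactness on a polydisc. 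Either way, your gluing argument in Step three (two quasi-psh functions that agree a.e.\ agree everywhere, since each equals its shrinking spherical/solid means at every point) is exactly the right way to pass from local to global, and the whole argument is sound.
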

\end{say}

\begin{say}[Systems of Hodge bundles, 
Higgs fields, curvatures, and so on]\label{f-say2.6}

Let us recall the definition of systems of Hodge bundles. 

\begin{defn}[Systems of Hodge bundles]\label{f-def2.7}
Let $V_0=(\mathbb V_0, F_0)$ be a polarizable variation of 
$\mathbb R$-Hodge structure on a complex manifold $X_0$, 
where $\mathbb V_0$ is a local system of finite-dimensional 
$\mathbb R$-vector spaces on $X_0$ and 
$\{F^p_0\}$ is the Hodge filtration. 
Then we obtain a Higgs bundle $(E_0, \theta_0)$ on $X_0$ by 
setting 
$$
E_0=\Gr_{F_0}^\bullet \mathscr V_0=\bigoplus _p F^p_0/F^{p+1}_0
$$ 
where $\mathscr V_0=\mathbb V_0\otimes \mathscr O_{X_0}$. 
Note that $\theta_0$ is induced by the Griffiths transversality 
$$
\nabla: F^p_0\to \Omega^1_{X_0}\otimes _{\mathscr O_{X_0}}
F^{p-1}_0. 
$$
More precisely, $\nabla$ induces 
$$
\theta^p_0: F^p_0/F^{p+1}_0\to \Omega^1_{X_0}\otimes 
_{\mathscr O_{X_0}}\left (F^{p-1}_0/F^p_0\right)
$$ for every $p$. 
Then 
$$\theta_0=\bigoplus_p \theta^p_0: E_0\to \Omega^1_{X_0}\otimes 
_{\mathscr O_{X_0}}E_0.
$$ 
The pair $(E_0, \theta_0)$ is usually called 
the {\em{system of Hodge bundles}} associated to $V_0=(\mathbb V_0, 
F_0)$ and $\theta_0$ is called the {\em{Higgs field}} of $(E_0, \theta_0)$. 

We further assume that $X_0$ is a Zariski open set of 
a complex manifold $X$ such that 
$D=X\setminus X_0$ is a normal crossing divisor on $X$ and 
that the local monodromy of $\mathbb V_0$ around $D$ is 
unipotent. Then, by \cite[(4.12)]{schmid}, 
we can extend $(E_0, \theta_0)$ to $(E, \theta)$ on $X$, 
where 
$$
E=\Gr_F^\bullet \mathscr V =\bigoplus _p F^p/F^{p+1}
$$ 
and $$
\theta: E\to \Omega^1_X(\log D)\otimes _{\mathscr O_X}E. 
$$ 
Note that $\mathscr V$ is the canonical extension of $\mathscr V_0$ and 
$F^p$ is the canonical extension of $F^p_0$, that is,  
$$
F^p=j_*F^p_0\cap \mathscr V, 
$$ 
where $j:X_0\hookrightarrow X$ is the natural open immersion, 
for every $p$. 
\end{defn}

We need the following important calculations of curvatures 
by Griffiths. For the basic definitions and properties of 
the induced metrics and curvatures for subbundles and 
quotient bundles of a vector bundle, see \cite[\S1 and \S2]{griffiths-tu}. 

\begin{lem}\label{f-lem2.8}
We use the same notation as in Definition \ref{f-def2.7}. 
Let $F^b_0$ be the lowest piece of the Hodge filtration. 
Let $q_0$ be the metric of $F^b_0$ 
induced by the Hodge metric. 
Let $\Theta _{q_0}(F^b_0)$ be the curvature 
form of $(F^b_0, q_0)$. 
Then we have 
$$
\Theta_{q_0}(F^b_0)+(\theta^b_0)^*
\wedge \theta^b_0=0
$$ 
where $(\theta^b_0)^*$ is the adjoint of $\theta^b_0$ with 
respect to the Hodge metric {\em{(}}see, for example, \cite{griffiths-tu} and 
\cite[(7.18) Lemma]{schmid}{\em{)}}. 
Let $\mathscr L_0$ be a quotient line bundle of $F^b_0$. 
Then we have the following short exact sequence of 
locally free sheaves:  
$$
0\to \mathscr S_0\to F^b_0\to \mathscr L_0\to 0. 
$$ 
Let $A$ be 
the second fundamental form of the 
subbundle $\mathscr S_0\subset F^b_0$. 
Let $h_0$ be the induced metric of $\mathscr L_0$. 
Then we obtain 
\begin{equation*}
\begin{split}
\sqrt{-1}\Theta _{h_0} (\mathscr L_0)&=\sqrt{-1} 
\Theta_{q_0}(F^b_0)|_{\mathscr L_0} 
+\sqrt{-1} A\wedge A^*
\\& =-\sqrt{-1}(\theta^b_0)^*\wedge \theta^b_0|_{\mathscr L_0}
+\sqrt{-1}A\wedge A^*. 
\end{split}
\end{equation*}
Note that $A^*$ is the adjoint of $A$ with 
respect to $q_0$. 
Therefore, the curvature form of $(\mathscr L_0, h_0)$ is 
a semipositive smooth $(1, 1)$-form on $X_0$. 
\end{lem}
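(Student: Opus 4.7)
The plan is to treat the two displayed identities independently and then combine them. The first, $\Theta_{q_0}(F^b_0)+(\theta^b_0)^*\wedge\theta^b_0=0$, is Griffiths' classical curvature identity for the Hodge metric on the lowest Hodge bundle. The second, the formula for $\Theta_{h_0}(\mathscr L_0)$, will then follow by substituting the first into the standard second fundamental form identity for the induced metric on a quotient.

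To establish the identity for $F^b_0$, I would first note that since $F^{b+1}_0=0$ the sheaf $F^b_0$ coincides with the graded piece $\Gr^b_{F_0}\mathscr V_0$, and $q_0$ agrees with the induced Hodge metric there. Using the $C^\infty$ Hodge decomposition $\mathscr V_0\simeq\bigoplus_p \Gr^p_{F_0}\mathscr V_0$, I would write the flat Gauss--Manin connection as
\[
\nabla=D+\theta_0+\theta_0^*,
\]
where $D$ is the direct sum of the Chern connections of the Hodge bundles for the induced Hodge metric and $\theta_0^*$ is the formal adjoint of $\theta_0$ (compare \cite[(7.18) Lemma]{schmid}). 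Expanding $\nabla^2=0$ and isolating the endomorphism-valued $(1,1)$-type component acting on $\Gr^b_{F_0}\mathscr V_0$ produces $\Theta_{q_0}(F^b_0)+(\theta^b_0)^*\wedge\theta^b_0$ plus a term factoring through $\Gr^{b+1}_{F_0}\mathscr V_0$. The latter vanishes because $\Gr^{b+1}_{F_0}\mathscr V_0=0$, so that $\theta_0^*$ is zero on $\Gr^b_{F_0}\mathscr V_0$, and the first identity follows.

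Next I would invoke the classical Griffiths--Tu formula for the curvature of a quotient bundle endowed with the induced metric: for $0\to\mathscr S_0\to F^b_0\to\mathscr L_0\to 0$ it reads
\[
\Theta_{h_0}(\mathscr L_0)=\Theta_{q_0}(F^b_0)|_{\mathscr L_0}+A\wedge A^*,
\]
see \cite[\S 1 and \S 2]{griffiths-tu}. Substituting the first identity gives the second displayed formula. For the final semipositivity claim, both $\sqrt{-1}\,A\wedge A^*$ and $-\sqrt{-1}\,(\theta^b_0)^*\wedge\theta^b_0|_{\mathscr L_0}$ are pointwise semipositive $(1,1)$-forms on $X_0$: pairing each with a local lift $\tilde\sigma\in F^b_0$ of a section of $\mathscr L_0$ produces a Hermitian matrix whose entries are the Hodge inner products of the components of $A\,\tilde\sigma$, respectively $\theta^b_0\,\tilde\sigma$, and such a Gram matrix is positive semidefinite.

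The main obstacle is bookkeeping of signs and conventions. One must verify that the general Griffiths curvature identity on $\Gr^p_{F_0}\mathscr V_0$, which in the generic case involves both $(\theta^p_0)^*\wedge\theta^p_0$ and $\theta^{p+1}_0\wedge(\theta^{p+1}_0)^*$, collapses at $p=b$ to a single term precisely because $\Gr^{b+1}_{F_0}\mathscr V_0=0$; one must also align the sign conventions for the adjoint of the Higgs field with those appearing in the Griffiths--Tu second fundamental form formula. Once the signs are fixed, the computation is classical.
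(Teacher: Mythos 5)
The paper does not actually give a proof of Lemma~\ref{f-lem2.8}: it states it as a known curvature computation and refers the reader to \cite[\S1 and \S2]{griffiths-tu} and \cite[(7.18) Lemma]{schmid}. Your proposal reconstructs exactly the classical argument those references contain, namely expanding $\nabla^2=0$ in the $C^\infty$ Hodge decomposition to get the Griffiths curvature identity on the lowest Hodge bundle, observing that the $\theta_0\wedge\theta_0^*$ term drops out at $p=b$ because $\Gr^{b+1}_{F_0}\mathscr V_0=0$, and then plugging this into the standard second-fundamental-form formula for the induced metric on a quotient. This is the same route the paper intends, just written out in full. One small imprecision worth flagging: the decomposition $\nabla=D+\theta_0+\theta_0^*$ is not the raw $(1,0)/(0,1)$-type splitting but requires identifying the anti-Higgs component $\bar\theta_0$ of $\nabla^{0,1}$ with the metric adjoint $\theta_0^*$ via the polarization, which is precisely the content of \cite[(7.18) Lemma]{schmid}; it would be cleaner to cite that step explicitly rather than asserting the decomposition outright. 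The final semipositivity argument is also a bit telegraphic but correct: $\sqrt{-1}A\wedge A^*$ is the usual Griffiths-semipositive quotient term, and $-\sqrt{-1}(\theta^b_0)^*\wedge\theta^b_0|_{\mathscr L_0}$ is semipositive because for a local section $v$ one has $-\sqrt{-1}\langle(\theta^b_0)^*\wedge\theta^b_0\,v,v\rangle=\sqrt{-1}\langle\theta^b_0 v,\theta^b_0 v\rangle\geq 0$.
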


In the proof of Theorem \ref{f-thm1.1} in Section \ref{f-sec4}, 
we will investigate asymptotic behaviors of $\log h_0$, $\partial \log h_0$, 
$\partial\overline{\partial}\log h_0$ near 
the normal crossing 
divisor $D$ and see that the largest lower semicontinuous extension 
$h$ of $h_0$ on $X$ has desired properties. 

\begin{lem}\label{f-lem2.9}
We use the same notation as in Definition \ref{f-def2.7}. 
Let $q_0$ be the Hodge metric on the system of Hodge bundles 
$(E_0, \theta_0)$ induced by the original Hodge metric. 
Let $\Theta_{q_0}(E_0)$ be the curvature form of $(E_0, q_0)$. 
Then we have 
$$
\Theta_{q_0}(E_0)+\theta_0\wedge \theta^*_0+
\theta^*_0\wedge \theta_0=0
$$ 
where $\theta^*_0$ is the adjoint of 
$\theta_0$ with respect to $q_0$ {\em{(}}see, for 
example, \cite{griffiths-tu} and \cite[(7.18) Lemma]{schmid}{\em{)}}. 
Therefore, we have 
$$
\sqrt{-1}\Theta_{q_0}(E_0)=-\sqrt{-1}\theta_0\wedge \theta^*_0
-\sqrt{-1}\theta^*_0\wedge \theta_0. 
$$ 
Let $\mathscr M_0$ be a line subbundle of $E_0$ which is 
contained in the kernel of $\theta_0$ and let $h^\dag_0$ be the 
induced metric on $\mathscr M_0$. 
Then 
\begin{equation*}
\begin{split}
\sqrt{-1}\Theta_{h^\dag_0}(\mathscr M_0)&=\sqrt{-1}\Theta_{q_0}
(E_0)|_{\mathscr M_0}+\sqrt{-1}A^*\wedge A
\\&=-\sqrt{-1}\theta_0\wedge \theta^*_0|_{\mathscr M_0}
-\sqrt{-1}\theta^*_0
\wedge \theta_0|_{\mathscr M_0}+\sqrt{-1}A^*\wedge A
\\ &=-\sqrt{-1}\theta_0\wedge \theta^*_0|_{\mathscr M_0}
+\sqrt{-1}A^*\wedge A
\end{split}
\end{equation*}
where $A$ is the second fundamental form of the line subbundle 
$\mathscr M_0\subset E_0$ and $A^*$ is the adjoint of $A$ with respect to 
$q_0$. 
Therefore, the curvature of $(\mathscr M_0, h^\dag_0)$ is a seminegative 
smooth $(1, 1)$-form on $X_0$. 
\end{lem}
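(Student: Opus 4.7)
The plan is to mirror the proof of Lemma \ref{f-lem2.8} step by step, simply replacing the lowest piece $F^b_0$ by the whole system of Hodge bundles $E_0$. The proof splits into two halves: first, establish the global curvature identity for $(E_0, q_0)$ from the flatness of the Gauss--Manin connection; second, apply the standard Griffiths formula for the induced curvature on a subbundle and use the hypothesis $\mathscr{M}_0 \subset \ker \theta_0$ to collapse one of the wedge terms.

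For the first identity, I would use the Hodge metric $q_0$ to produce a $C^\infty$ orthogonal splitting of the Hodge filtration, yielding a $C^\infty$ Hermitian identification $\mathscr V_0 \cong \bigoplus_p F^p_0/F^{p+1}_0 = E_0$. Under this identification, the flat Gauss--Manin connection $\nabla$ on $\mathscr V_0$ decomposes as
\[
\nabla = D_{q_0} + \theta_0 + \theta_0^{*},
\]
where $D_{q_0}$ is the Chern connection of $(E_0, q_0)$, $\theta_0$ is the Higgs field of type $(1,0)$, and $\theta_0^{*}$ is its metric adjoint of type $(0,1)$; this is classical in Hodge theory and goes back to Griffiths (see \cite[(7.18) Lemma]{schmid}). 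Expanding $\nabla^2 = 0$ and separating bidegrees, the type-$(2,0)$ and type-$(0,2)$ pieces give the Higgs integrability conditions $\theta_0 \wedge \theta_0 = 0$ and $\theta_0^{*}\wedge \theta_0^{*} = 0$, while the type-$(1,1)$ piece yields
\[
\Theta_{q_0}(E_0) + \theta_0 \wedge \theta_0^{*} + \theta_0^{*}\wedge \theta_0 = 0,
\]
as asserted.

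For the second half, I invoke Griffiths' formula for the curvature of a holomorphic subbundle with the induced metric (cf.\ \cite[\S1 and \S2]{griffiths-tu}): writing $A$ for the second fundamental form of $\mathscr M_0 \subset E_0$,
\[
\sqrt{-1}\,\Theta_{h^{\dag}_0}(\mathscr M_0) = \sqrt{-1}\,\Theta_{q_0}(E_0)|_{\mathscr M_0} + \sqrt{-1}\, A^{*}\wedge A.
\]
Substituting the first identity produces the middle line of the lemma. To reach the last line, I observe that $\theta_0^{*}\wedge \theta_0$, when applied to a local section, begins by applying $\theta_0$; since $\mathscr M_0 \subset \ker \theta_0$, this term restricts to zero on $\mathscr M_0$. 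Semi-negativity of the resulting $(1,1)$-form then follows because $\sqrt{-1}\,A^{*}\wedge A$ is seminegative for subbundles by the standard Griffiths inequality, while $-\sqrt{-1}\,\theta_0 \wedge \theta_0^{*}|_{\mathscr M_0}$ is also seminegative as the negative of a Hermitian-square-like expression.

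The main obstacle, such as it is, is purely notational: one has to pin down the two sign/order conventions (which factor in a wedge such as $\theta_0^{*}\wedge \theta_0$ acts first on a section, and the correct sign of $\sqrt{-1}\,A^{*}\wedge A$ for a subbundle as opposed to the $A\wedge A^{*}$ appearing in Lemma \ref{f-lem2.8} for a quotient), so that the right wedge term is the one that restricts to zero on $\mathscr M_0$. Once these conventions are fixed, the argument is a direct parallel of Lemma \ref{f-lem2.8}, with no new analytic input required.
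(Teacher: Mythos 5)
Your proposal is correct and reconstructs essentially the argument the paper delegates to the cited sources (Griffiths--Tu and Schmid (7.18)); the paper itself states the lemma without a separate proof, the displayed chain of equalities being the whole content. Deriving $\Theta_{q_0}(E_0)+\theta_0\wedge\theta_0^*+\theta_0^*\wedge\theta_0=0$ by expanding $\nabla^2=0$ in the $C^\infty$-splitting $\nabla=D_{q_0}+\theta_0+\theta_0^*$ and isolating the form-bidegree-$(1,1)$, Hodge-grading-preserving component is exactly the classical route, and the application of the Griffiths subbundle curvature formula together with $\mathscr M_0\subset\ker\theta_0$ (so that $\theta_0^*\wedge\theta_0|_{\mathscr M_0}=0$, since $\theta_0$ acts first in that wedge) gives the remaining lines. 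Your signposting of the two sign/order conventions to keep straight is also the right caveat: $\sqrt{-1}\,A^*\wedge A$ is seminegative for a subbundle, and $-\sqrt{-1}\,\theta_0\wedge\theta_0^*$ is seminegative because $\sqrt{-1}\,\theta_0\wedge\theta_0^*$ (with $\theta_0^*$ acting first) is a Hermitian square, so the conclusion follows.
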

\end{say}

\section{Nefness}\label{f-sec3}

Let us start with the definition of nef line bundles on 
projective varieties. 

\begin{defn}[Nef line bundles]\label{f-def3.1}
A line bundle $\mathscr L$ on a projective 
variety $X$ is {\em{nef}} if $\mathscr L\cdot C\geq 0$ for 
every curve $C$ on $X$. 
\end{defn}

In this paper, we need the notion of nef locally free sheaves (or 
vector bundles) on 
projective varieties, which is a generalization of Definition \ref{f-def3.1}. 

\begin{defn}[Nef locally free sheaves]\label{f-def3.2}
A locally free sheaf (or vector bundle) $\mathscr E$ of finite rank on a 
projective variety $X$ is {\em{nef}} 
if the following equivalent conditions are satisfied: 
\begin{itemize}
\item[(i)] $\mathscr E=0$ or $\mathscr O_{\mathbb P_X(\mathscr E)}(1)$ is 
nef on $\mathbb P_X(\mathscr E)$. 
\item[(ii)] For every map from a smooth 
projective curve $f:C\to X$, every quotient line bundle 
of $f^*\mathscr E$ has nonnegative degree. 
\end{itemize}
\end{defn}

A nef locally free sheaf in Definition \ref{f-def3.2} 
was originally called 
a ({\em{numerically}}) {\em{semipositive}} sheaf in the 
literature. 

Let us recall the definition of nef line bundles in the sense of 
Demailly (see \cite[(6.11) Definition]{demailly}). 

\begin{defn}[Nef line bundles in the sense of Demailly]\label{f-def3.3}
A holomorphic line bundle $\mathscr L$ on a compact 
complex manifold $X$ is said to be {\em{nef}} if for every $\varepsilon>0$ 
there is a smooth hermitian metric $h_{\varepsilon}$ on 
$\mathscr L$ such that 
$\sqrt{-1}\Theta_{h_\varepsilon}(\mathscr L)\geq -\varepsilon \omega$, 
where $\omega$ is a fixed hermitian metric on $X$. 
\end{defn}

We can easily check: 

\begin{lem}\label{f-lem3.4}
If $X$ is projective in Definition \ref{f-def3.3}, 
then $\mathscr L$ is nef in the sense of Demailly if and 
only if $\mathscr L$ is nef in the usual sense. 
\end{lem}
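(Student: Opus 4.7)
The plan is to prove the two directions of Lemma~\ref{f-lem3.4} separately, with the implication "Demailly nef $\Rightarrow$ algebraically nef" being immediate and the converse requiring Kleiman's ampleness criterion as the main algebraic input. For the easy direction, assume $\mathscr L$ is nef in the sense of Definition~\ref{f-def3.3}. For any irreducible curve $C\subset X$ and any $\varepsilon>0$, extract a smooth hermitian metric $h_\varepsilon$ on $\mathscr L$ satisfying $\sqrt{-1}\Theta_{h_\varepsilon}(\mathscr L)\geq -\varepsilon\omega$. Since the first Chern class $c_1(\mathscr L)$ is represented by $\frac{\sqrt{-1}}{2\pi}\Theta_{h_\varepsilon}(\mathscr L)$ independently of the chosen metric, integrating against $C$ yields
$$
\mathscr L\cdot C \;=\; \frac{1}{2\pi}\int_C \sqrt{-1}\Theta_{h_\varepsilon}(\mathscr L) \;\geq\; -\frac{\varepsilon}{2\pi}\int_C \omega.
$$
Letting $\varepsilon\to 0$ forces $\mathscr L\cdot C\geq 0$, so $\mathscr L$ is nef in the usual algebraic sense.

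For the converse, suppose $\mathscr L$ is nef in the sense of Definition~\ref{f-def3.1}. Since $X$ is projective, fix an ample line bundle $H$ together with a smooth hermitian metric $h_H$ of strictly positive curvature $\omega_0:=\sqrt{-1}\Theta_{h_H}(H)>0$; such a metric exists by pulling back the Fubini--Study metric through a projective embedding defined by a sufficiently high power of $H$. By Kleiman's ampleness criterion, the line bundle $n\mathscr L+H$ is ample for every positive integer $n$, so it admits a smooth hermitian metric $g_n$ with $\sqrt{-1}\Theta_{g_n}(n\mathscr L+H)>0$. Set
$$
h_n:=g_n^{1/n}\otimes h_H^{-1/n},
$$
a genuine smooth hermitian metric on $\mathscr L$. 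A direct curvature computation then gives
$$
\sqrt{-1}\Theta_{h_n}(\mathscr L) \;=\; \frac{1}{n}\sqrt{-1}\Theta_{g_n}(n\mathscr L+H) - \frac{1}{n}\omega_0 \;\geq\; -\frac{1}{n}\omega_0.
$$
Since $X$ is compact, any two hermitian metrics on $X$ are mutually bounded, so there exists a constant $M>0$ with $\omega_0\leq M\omega$; choosing $n\geq M/\varepsilon$ then yields $\sqrt{-1}\Theta_{h_n}(\mathscr L)\geq -\varepsilon\omega$, as required.

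The only nontrivial ingredient in this plan is Kleiman's criterion (nef $+$ ample $=$ ample in the projective setting), which is what bridges the algebraic numerical condition on curves and the analytic curvature condition in Demailly's definition. Once it is invoked, the remainder of the proof is a routine rescaling of curvatures together with the standard compactness-based comparability of hermitian metrics on $X$, so no further real obstacle arises.
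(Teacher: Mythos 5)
Your proof is correct and is essentially the standard argument that the paper defers to (the paper gives no proof of its own beyond citing Demailly's Proposition~6.10): one direction by integrating the curvature inequality over a curve and letting $\varepsilon\to 0$, the other by applying Kleiman's criterion to $n\mathscr L + H$ and taking $n$-th roots of the resulting positively curved metrics. The only cosmetic point worth noting is that in the first direction one should integrate over the normalization of $C$ (or the fundamental cycle) when $C$ is singular, but this does not affect the argument.
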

\begin{proof}
It is an easy exercise. 
For the details, see \cite[(6.10) Proposition]{demailly}. 
\end{proof}

The following proposition is 
more or less well-known to the experts. We write the proof for the reader's 
convenience. 

\begin{prop}\label{f-prop3.5}
Let $X$ be a compact complex manifold and let $\mathscr L$ be 
a holomorphic line bundle equipped with 
a singular hermitian metric $h$. 
Assume that $\sqrt{-1}\Theta_h(\mathscr L)\geq 0$ and 
the Lelong number of $h$ is zero everywhere. 
Then $\mathscr L$ is a nef line bundle in the sense of 
Definition \ref{f-def3.3}.
\end{prop}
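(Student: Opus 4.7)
The plan is to combine the hypothesis on vanishing Lelong numbers with Demailly's regularization theorem for quasi-plurisubharmonic functions on a compact complex manifold (this is precisely the ingredient named in the introduction as the second pillar of the paper's proofs). Concretely, fix once and for all a smooth hermitian metric $g$ on $\mathscr L$ and a hermitian metric $\omega$ on the compact manifold $X$. The ratio $h/g$ is a globally defined nonnegative locally integrable function on $X$, so there is a global weight $\varphi\in L^1_{\mathrm{loc}}(X)$ with $h=ge^{-2\varphi}$. By Lemma \ref{f-lem2.5}, after modifying $\varphi$ on a set of measure zero we may assume $\varphi$ is quasi-plurisubharmonic on $X$, and by hypothesis its Lelong number $\nu(\varphi,x)$ vanishes at every point of $X$.

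Next I would invoke Demailly's approximation theorem (see \cite{demailly1} and \cite[Chapter~13]{demailly}): there exist a sequence of smooth functions $\varphi_{\nu}$ on $X$ and a decreasing sequence of continuous functions $\lambda_{\nu}\colon X\to [0,\infty)$ such that $\varphi_\nu$ converges to $\varphi$ (in $L^1$ and almost everywhere) and
\[
\sqrt{-1}\,\partial\overline{\partial}\varphi_\nu \;\geq\; \sqrt{-1}\,\partial\overline{\partial}\varphi \;-\; \lambda_\nu\,\omega
\]
in the sense of currents, with $\lambda_\nu(x)\searrow \nu(\varphi,x)=0$ for every $x\in X$ as $\nu\to\infty$. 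Here the vanishing of the pointwise limit is the place where the Lelong number hypothesis enters decisively.

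Since $X$ is compact, the $\lambda_\nu$ are continuous, and the sequence $\{\lambda_\nu\}$ is monotone decreasing to the (continuous, identically zero) limit, Dini's theorem gives uniform convergence $\lambda_\nu\to 0$ on $X$. Thus, given any $\varepsilon>0$, we may pick $\nu$ so large that $2\lambda_\nu<\varepsilon$ on $X$, and then set
\[
h_\varepsilon \;:=\; g\,e^{-2\varphi_\nu}.
\]
This is a smooth hermitian metric on $\mathscr L$, and by the definition of curvature together with the weight computation,
\[
\sqrt{-1}\,\Theta_{h_\varepsilon}(\mathscr L)
= \sqrt{-1}\,\Theta_g(\mathscr L)+2\sqrt{-1}\,\partial\overline{\partial}\varphi_\nu
\geq \sqrt{-1}\,\Theta_g(\mathscr L)+2\sqrt{-1}\,\partial\overline{\partial}\varphi - 2\lambda_\nu\omega
= \sqrt{-1}\,\Theta_h(\mathscr L) - 2\lambda_\nu\omega
\geq -\varepsilon\omega,
\]
using $\sqrt{-1}\,\Theta_h(\mathscr L)\geq 0$ in the last step. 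This is exactly the condition in Definition \ref{f-def3.3}, so $\mathscr L$ is nef in the sense of Demailly.

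The step I expect to require the most care is the passage from the pointwise statement $\lambda_\nu(x)\to 0$ to the uniform bound $\lambda_\nu<\varepsilon/2$ on $X$: one must check that the $\lambda_\nu$ produced by Demailly's construction are indeed continuous (so Dini applies) and decreasing in $\nu$; alternatively, one can replace $\lambda_\nu$ by $\sup_X \lambda_\nu$ after invoking upper semicontinuity of Lelong numbers together with compactness. Beyond this, the argument is a direct unwinding of the definitions once the approximation theorem is at hand.
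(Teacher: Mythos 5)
Your argument is correct, and it uses the same overarching strategy as the paper (Demailly regularization on a compact manifold), but the specific form of the theorem you invoke and the way you exploit the zero-Lelong-number hypothesis are genuinely different from what the paper does. The paper invokes the version of Demailly's approximation that produces quasi-plurisubharmonic functions $\psi_\varepsilon$ with \emph{analytic singularities} satisfying $\sqrt{-1}\partial\overline{\partial}\psi_\varepsilon \geq \gamma - \tfrac{1}{2}\varepsilon\omega$ together with the Lelong-number comparison $\nu(\psi_\varepsilon,x)\leq\nu(\varphi,x)$. The zero-Lelong-number hypothesis then enters as follows: a function with analytic singularities whose Lelong number vanishes everywhere has empty polar set, hence is actually smooth; thus the paper never needs a uniform estimate on an error term. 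You instead invoke the version of Demailly's theorem (Theorem 1.1 of \cite{demailly1}) that directly produces \emph{smooth} approximants $\varphi_\nu$ on $X\setminus E_c(T)$ together with continuous functions $\lambda_\nu$ decreasing pointwise to the (capped) Lelong number, and then use Dini's theorem on the compact $X$ to upgrade the pointwise convergence $\lambda_\nu\searrow 0$ to uniform convergence. Both approaches work; yours substitutes a soft compactness argument (Dini) for the paper's structural observation about analytic singularities. Two small points worth tightening: first, you should note explicitly that $E_c(T)=\{x : \nu(\varphi,x)\geq c\}=\emptyset$ for every $c>0$, which is why the $\varphi_\nu$ in Demailly's Theorem 1.1 are smooth on all of $X$ rather than merely outside an analytic set. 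Second, the precise statement of Theorem 1.1 has an additional term $-\varepsilon_\nu\omega$ (with $\varepsilon_\nu\to 0$) on the right-hand side of the curvature estimate; you omit it, but this is harmless since it can be absorbed into the same $\varepsilon$-budget as $\lambda_\nu$.
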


First, we give a quick proof of Proposition \ref{f-prop3.5} 
when $X$ is projective. It is an easy application of 
the Nadel vanishing theorem and the Castelnuovo--Mumford 
regularity. 

\begin{proof}[Proof of Proposition \ref{f-prop3.5} when $X$ is projective]
Let $\mathscr A$ be an ample line bundle on $X$ such that 
$|\mathscr A|$ is basepoint-free. 
By Skoda's theorem (see \cite[(5.6) Lemma]{demailly}), 
we have $\mathscr J(h^m)
=\mathscr O_X$ for 
every positive integer $m$, 
where $\mathscr J(h^m)$ is the multiplier ideal 
sheaf of $h^m$. 
Here, we used the fact that the Lelong number 
of $h$ is zero everywhere. 
By the Nadel vanishing theorem, 
$$
H^i(X, \omega_X\otimes \mathscr L^{\otimes m}\otimes 
\mathscr A^{\otimes n+1-i})=0
$$ 
for every $0<i\leq n=\dim X$ and every positive integer $m$. 
By the Castelnuovo--Mumford regularity, 
$\omega_X\otimes \mathscr L^{\otimes m}\otimes 
\mathscr A^{\otimes n+1}$ is generated by global sections for 
every positive integer $m$. 
We take a curve $C$ on $X$. 
Then $C\cdot (\omega_X\otimes \mathscr L^{\otimes m}
\otimes \mathscr A^{\otimes n+1})\geq 0$ for every 
positive integer $m$. 
This means that $C\cdot \mathscr L\geq 0$. 
Therefore, $\mathscr L$ is nef in the usual sense. 
\end{proof}

Next, we prove Proposition \ref{f-prop3.5} when $X$ is not 
necessarily projective. The proof depends on 
Demailly's approximation theorem 
for quasi-plurisubharmonic functions on 
complex manifolds (see \cite{demailly1}). 

\begin{proof}[Proof of Proposition \ref{f-prop3.5}:~general case]
Let $\omega$ be a hermitian metric on $X$ and 
let $\varepsilon$ be any positive real number. 
We fix a smooth hermitian metric $g$ on $\mathscr L$. 
Then we can write $h=ge^{-2\varphi}$, 
where $\varphi$ is an integrable function on $X$. 
Since $\sqrt{-1}\Theta_h (\mathscr L)\geq 0$, we see that 
$$\sqrt{-1}\partial \overline\partial \varphi\geq 
-\frac{1}{2}\sqrt{-1}\Theta_{g}
(\mathscr L)=:\gamma.$$ 
By Lemma \ref{f-lem2.5}, we may assume 
that $\varphi$ is quasi-plurisubharmonic. 
Note that $\gamma$ is a smooth $(1, 1)$-form on $X$. 
By \cite[Proposition 3.7]{demailly1} (see 
also \cite[(13.12) Theorem]{demailly} and \cite[Theorem 56]{demailly3}), 
we can construct a quasi-plurisubharmonic 
function $\psi_\varepsilon$ on $X$ 
with only analytic singularities (see \eqref{f-eq31} below) 
such that 
$$
\sqrt{-1}\partial \overline\partial \psi_{\varepsilon}\geq \gamma 
-\frac{1}{2}\varepsilon \omega 
$$ 
(see \cite[Proposition 3.7 (iii)]{demailly1}, \cite[(13.12) Theorem (c)]{demailly}, 
and \cite[Theorem 56 (c)]{demailly3}). 
Since the Lelong number of $h$ is zero everywhere by assumption, we obtain 
$$
0\leq \nu(\psi_{\varepsilon}, x)\leq \nu(\varphi, x)=0
$$ 
for every $x\in X$ by \cite[Proposition 3.7 (ii)]{demailly1} 
(see also \cite[(13.12) Theorem (b)]{demailly} 
and \cite[Theorem 56 (b)]{demailly3}). Therefore, the Lelong number 
of $\psi_{\varepsilon}$ is zero everywhere. 
By construction, we can easily see that $\psi_{\varepsilon}$ is smooth 
outside $\{x\in X \, |\, \psi_{\varepsilon}(x)=-\infty\}$. 
As mentioned above, 
$\psi_{\varepsilon}$ has only analytic singularities, that is, 
it can be written locally near every point $x_0\in X$ as 
\begin{equation}\label{f-eq31}
\psi_{\varepsilon}(z)=c\log \sum _{1\leq j\leq N}|g_j(z)|^2+O(1)
\end{equation}
with a family of holomorphic functions $\{g_1, \ldots, g_N\}$ defined near $x_0$ and a positive 
real number $c$ (see \cite[Definition 52]{demailly3}). 
Since $\nu(\psi_{\varepsilon}, x)=0$ for every $x\in X$, 
we obtain that $\psi_{\varepsilon}\ne -\infty$ everywhere. 
Therefore, $\psi_{\varepsilon}$ is a smooth function on $X$. 
We put $h_\varepsilon=ge^{-2\psi_{\varepsilon}}$. 
Then $h_\varepsilon$ is a smooth 
hermitian metric on $\mathscr L$ such that 
$\sqrt{-1}\Theta_{h_\varepsilon}(\mathscr L)\geq -\varepsilon \omega$. 
This means that $\mathscr L$ is a nef line bundle in the sense of 
Definition \ref{f-def3.3}. 
\end{proof}

\section{Proof of 
Theorem \ref{f-thm1.1}}\label{f-sec4}

In this section, we will prove Theorem \ref{f-thm1.1} and 
Corollary \ref{f-cor1.2}. The arguments below heavily depend on 
\cite[Section 5]{kollar}. Therefore, we strongly recommend 
the reader to see \cite[Section 5]{kollar}, especially \cite[Definition 5.3]{kollar}, 
before reading this section. 

\begin{say}\label{f-say4.1}
We put 
$
\Delta_a=\{z\in \mathbb C\, |\, |z|<a\}$, 
$\overline \Delta_a=\{z\in \mathbb C\, 
|\, |z|\leq a\}$, and $\Delta_a^*=\Delta_a
\setminus 
\{0\}$. On $\Delta^n_a$, we fix coordinates $z_1, \ldots, z_n$. 
\end{say}

Let us quickly recall the definition of {\em{nearly boundedness}} and 
{\em{almost boundedness}} due to Koll\'ar for 
the reader's convenience. 

\begin{defn}[{see \cite[Definition 5.3 (vi) and (vii)]{kollar}}]\label{f-def4.2}
On $(\Delta_a^*)^n$ with $0<a<e^{-1}$, we define 
the {\em{Poincar\'e metric}} by declaring the coframe 
$$
\left\{\frac{dz_i}{z_i \log |z_i|}, \frac{d\bar z_i}{\bar z_i \log |z_i|} \right\}
$$
to be unitary. This defines a frame of every $\Omega^k$ which 
we will refer to as the {\em{Poincar\'e frame}}. 

A function $f$ defined on a dense Zariski open set of 
$\Delta^n_a$ is called 
{\em{nearly bounded}} on $\Delta^n_a$ if $f$ is smooth 
on $(\Delta^*_a)^n$ and there are 
$C>0$, $k>0$ and 
$\varepsilon >0$ such that 
for every ordering of the coordinate functions $z_1, \ldots, z_n$ at least 
one of the following conditions is satisfied for every 
$z\in \{ z\in (\Delta_a^*)^n \, | \, |z_1| \leq \cdots 
\leq |z_n|\}$. 
\begin{itemize}
\item[$(\mathbf{a})$:] $|f|\leq C$, 
\item[$(\mathbf{b})$:] 
$|z_1|\leq \exp (-|z_m|^{-\varepsilon})$ and 
$|f| \leq C(-\log |z_m|)^k$ for some $2\leq m\leq n$. 
\end{itemize}

A form $\eta$ defined on a dense Zariski open set 
of $\Delta^n_a$ is called 
{\em{nearly bounded}} on $\Delta^n_a$ if the 
coefficient functions are nearly bounded 
on $\Delta^n_a$ when 
we write $\eta$ in terms of the Poincar\'e frame. 
If $\eta_1$ and $\eta_2$ are nearly 
bounded on the same $\Delta^n_a$, 
then $\eta_1\wedge \eta_2$ is nearly bounded on $\Delta^n_a$. 

A form $\eta$ defined on a dense Zariski open set of 
$\Delta^n_a$ is called {\em{almost bounded}} on $\Delta^n_a$ if 
there is a proper bimeromorphic map $p:W\to \Delta_a^n$ such 
that $W$ is smooth and every $w\in W$ has a neighborhood 
where $p^*\eta$ is nearly bounded. 
\end{defn}

\begin{rem}\label{f-rem4.3}
The definition of nearly boundedness and almost boundedness in Definition 
\ref{f-def4.2} is slightly different from Koll\'ar's 
original one (see \cite[Definition 5.3 (vii)]{kollar}). 
We think that it is a kind of clarification. 
Of course, everything in \cite[Section 5]{kollar} works well for our 
definition. 
\end{rem}

\begin{say}[Proof of Theorem \ref{f-thm1.1}]\label{f-say4.4}
We fix a smooth hermitian metric $g$ on $\mathscr L$. 
The Hodge metric induces a smooth hermitian metric 
$h_0$ on $\mathscr L|_{X_0}$. 
Then we can 
write 
$$
h_0=ge^{-2\varphi_0}
$$
for some smooth function $\varphi_0$ on $X_0$. 
We use the same notation as in Lemma \ref{f-lem2.8}. 
Let $\mathscr V$ be the 
canonical extension of $\mathscr V_0=\mathbb V_0\otimes \mathscr O_{X_0}$. 
Let $q_0$ be the Hodge metric on $\mathscr V_0$. 
For simplicity, we use the 
same notation $q_0$ to denote $(q_0)|_{F^b_0}$, 
that is, the metric on $F^b_0$ induced by the metric 
$q_0$ on $\mathscr V_0$. 
Let $P$ be an arbitrary point of $X$. 
We take a suitable local coordinate $(z_1, \ldots, z_n)$ centered at $P$ 
and a small positive real number $a$ with $a<e^{-1}$. 
Then, by \cite[Theorem 5.21]{cks} (see also 
\cite{kashiwara} and \cite[Claim 7.8]{viehweg-zuo}), we can write 
$$
\mathscr V|_{\Delta^n_a}\simeq \bigoplus _{i=1}^r\mathscr O_{\Delta^n_a}e_i(z), 
$$ 
where $e_i(z)\in \Gamma (\Delta^n_a, \mathscr V)$, such that 
\begin{equation}\label{f-eq41}
q_0(e_i(z), e_i(z))\leq C_1 (-\log|z_1|)^{a_1}\cdots (-\log |z_n|)^{a_n}
\end{equation} 
for $z\in (\Delta^*_a)^n$, where $a_1, \ldots, a_n$ 
are some positive integers and 
$C_1$ is a large positive real number. 
By making $a$ smaller, we may further assume that 
$$
\mathscr L|_{\Delta^n_a}\simeq \mathscr O_{\Delta^n_a}e(z), 
$$ 
where $e(z)\in \Gamma (\Delta^n_a, \mathscr L)$ is a nowhere vanishing section of 
$\mathscr L$ on $\Delta^n_a$. 
We take a lift $f(z)\in \Gamma (\Delta^n_a, F^b)$ of $e(z)$, that is, 
$p(f(z))=e(z)$, where $p:F^b\to \mathscr L$. 
Then we can write 
\begin{equation}\label{f-eq42} 
f(z)=f_1(z)e_1(z)+\cdots +f_r(z)e_r(z), 
\end{equation} 
where $f_i(z)$ is a holomorphic function on $\Delta^n_a$ for every 
$i$. 
By making $a$ smaller again, we may assume that 
$f_i(z)$ is holomorphic in a neighborhood of $(\overline \Delta_a)^n$. 
Of course, we may further assume that 
$e(z)\ne 0$ in a neighborhood of $(\overline \Delta_a)^n$. 
By \eqref{f-eq41} and \eqref{f-eq42}, 
we obtain that there exists some large positive 
real number $C_2$ such that 
$$
q_0(f(z), f(z))\leq C_2(-\log|z_1|)^{a_1}\cdots (-\log |z_n|)^{a_n}
$$ holds 
for $z\in (\Delta^*_a)^n$. 
Therefore, 
\begin{equation*}
\begin{split}
C_3e^{-2\varphi_0(z)}&\leq g(e(z), e(z))e^{-2\varphi_0(z)}\\ 
&=h_0(e(z), e(z))\\
&\leq q_0(f(z), f(z))\leq C_2(-\log|z_1|)^{a_1}\cdots (-\log |z_n|)^{a_n}
\end{split}
\end{equation*}
for $z\in (\Delta^*_a)^n$, where $$C_3=\min _{z\in (\overline \Delta_a)^n}
g(e(z), e(z))>0.$$ 
Thus, 
$$
-\varphi_0(z)\leq \log \left( C\left(-\log |z_1|\right)^{a_1}
\cdots \left(-\log |z_n|\right)^{a_n}\right)
$$ 
holds for $z\in (\Delta^*_a)^n$, where 
$C$ is some large positive real number. 
By applying similar arguments to the dual line bundle $\mathscr L^{\vee}$, 
we may further assume that 
$$
\varphi_0(z)\leq \log \left( C\left(-\log |z_1|\right)^{a_1} 
\cdots \left(-\log |z_n|\right)^{a_n}\right)
$$ 
holds for $z\in (\Delta^*_a)^n$. 
Let $\varphi$ be the smallest upper 
semicontinuous function that extends $\varphi_0$ 
to $X$. More explicitly, 
$$
\varphi(z)=\lim _{\varepsilon \to 0}\sup _{w\in \Delta^n_\varepsilon\cap X_0}\varphi_0(w), 
$$
where $\Delta^n_\varepsilon$ is a polydisc on $X$ centered at $z\in X$. 
Then, by Lemma \ref{f-lem4.6}, we obtain: 
\begin{lem}\label{f-lem4.5}
$\varphi$ is locally integrable on $X$. 
\end{lem}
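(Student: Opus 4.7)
The strategy is to exploit the two-sided estimate on $\varphi_0$ that has just been assembled. On an arbitrary polydisc $(\Delta^*_a)^n$ around a point $P\in X$, combining the two displayed inequalities produces
\[
|\varphi_0(z)| \;\leq\; \log C + \sum_{i=1}^n a_i \log(-\log|z_i|), \qquad z\in (\Delta^*_a)^n,
\]
so $\varphi_0$ is pointwise dominated by an iterated-logarithm expression in the coordinate functions. The plan is to show that this dominating function is locally integrable on $\Delta^n_a$, and then to transfer this integrability to the upper semicontinuous extension $\varphi$.

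On $X_0$ the smooth function $\varphi_0$ is continuous, so $\varphi=\varphi_0$ there. On $D=X\setminus X_0$ the u.s.c.\ envelope $\varphi$ may well take the value $+\infty$ (indeed the bound above blows up as any $z_i\to 0$), but this contributes nothing to Lebesgue integrals, since $D$ is a proper closed analytic subset of $X$ and therefore has Lebesgue measure zero. Consequently the local integrability of $\varphi$ on a polydisc centered at $P$ is equivalent to the local integrability of $\varphi_0$ on the complement of $D$, which in turn is implied by the local integrability of the bound function.

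The remaining step is a routine one-variable computation. On $\Delta_a$ with $a<e^{-1}$, passage to polar coordinates and the substitution $u=-\log r$ give
\[
\int_{\Delta_a} \log(-\log|z|)\,dA(z) = 2\pi\int_0^a \log(-\log r)\, r\, dr = 2\pi \int_{-\log a}^\infty \log(u)\, e^{-2u}\, du < \infty.
\]
By Fubini the full bound $\sum_i a_i \log(-\log|z_i|)$ is integrable on $\Delta^n_a$, hence so is $|\varphi_0|$, and the conclusion for $\varphi$ follows. I expect Lemma~\ref{f-lem4.6} to package precisely such an integrability criterion in a form directly applicable here, so that the proof of Lemma~\ref{f-lem4.5} consists essentially of plugging the bound just obtained into that lemma.

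The only conceptual point requiring care — and the nearest thing to an obstacle — is the behavior of $\varphi$ along $D$. One must be comfortable that an upper semicontinuous extension which attains $+\infty$ on a null analytic set still defines a bona fide element of $L^1_{\mathrm{loc}}$. This is standard, but it is the one place where one has to distinguish between the pointwise value of the envelope and its contribution to integrals; once that is settled the argument reduces to the elementary integral estimate above.
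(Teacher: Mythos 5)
Your proof is correct and follows essentially the same route as the paper: you combine the two one-sided bounds on $\varphi_0$ into the two-sided estimate $|\varphi_0(z)|\leq \log C + \sum_i a_i\log(-\log|z_i|)$, reduce to the one-variable integrability of $r\log(-\log r)$ near $0$ (which is exactly Lemma~\ref{f-lem4.6}), and note that the values of $\varphi$ on the measure-zero set $D$ are irrelevant. The paper phrases the same estimate in terms of the positive and negative parts $\varphi_{\pm}$ rather than $|\varphi_0|$, but this is only cosmetic; your explicit remarks on Fubini and on the null set $D$ are details the paper leaves implicit.
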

\begin{proof}[Proof of Lemma \ref{f-lem4.5}] 
Let $P$ be an arbitrary point of $X$. 
In a small open neighborhood of $P$, we have 
$$
0\leq \varphi_{\pm} (z) \leq \log \left( C\left(-\log |z_1|\right)^{a_1} 
\cdots \left(-\log |z_n|\right)^{a_n}\right)
$$where 
$\varphi_+=\max \{ \varphi, 0\}$ and $\varphi_-=\varphi_+-\varphi$. 
By Lemma \ref{f-lem4.6} below, we obtain that $\varphi$ is 
locally integrable on $X$. 
\end{proof}

We have already used: 

\begin{lem}\label{f-lem4.6}
We have 
$$
\int _0^a r\log (-\log r)dr<\infty
$$ 
for $0<a <e^{-1}$. 
\end{lem}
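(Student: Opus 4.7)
The plan is straightforward since this is elementary calculus, and I would expect no real obstacle. The hypothesis $0 < a < e^{-1}$ is used only to ensure that $-\log r > 1 > 0$ for every $r \in (0, a]$, so that $\log(-\log r)$ is a well-defined positive real number; the only genuine question is the integrability of $r \log(-\log r)$ near $r = 0$.

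The key observation I would use is that $r \log(-\log r) \to 0$ as $r \to 0^+$. To see this, make the substitution $s = -\log r$, so that $r = e^{-s}$ and $s \to +\infty$ as $r \to 0^+$; then
\[
r \log(-\log r) = e^{-s} \log s \longrightarrow 0,
\]
since exponential decay beats logarithmic growth. Consequently, the integrand extends continuously to the compact interval $[0, a]$, is therefore bounded there, and is integrable.

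Alternatively, if one prefers an explicit computation rather than a limit argument, the same substitution $s = -\log r$, $dr = -e^{-s}\, ds$, transforms the integral into
\[
\int_0^a r \log(-\log r)\, dr = \int_{-\log a}^{\infty} e^{-2s} \log s\, ds,
\]
whose convergence at $s = +\infty$ is immediate by the same exponential-versus-logarithm comparison, while the integrand is continuous at the finite endpoint $s = -\log a > 1$. Either route yields the conclusion in a line or two, and the lemma is really just a technical book-keeping statement ensuring that the estimates from the preceding paragraph feed into the definition of local integrability.
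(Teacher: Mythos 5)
Your proposal is correct, and your second route is essentially the paper's own proof: the paper also substitutes $t=-\log r$ to obtain $\int_{-\log a}^{\infty} e^{-2t}\log t\,dt$, then crudely bounds $\log t\leq t$ and $te^{-2t}\leq e^{-t}$ to evaluate the dominating integral explicitly. Your first route is a small simplification the paper does not bother to state: since $r\log(-\log r)=e^{-s}\log s\to 0$ as $s=-\log r\to\infty$, the integrand extends continuously (hence boundedly) to the compact interval $[0,a]$, and integrability is immediate without transforming the integral at all. Both arguments are sound; the boundedness observation buys you a one-line proof, while the paper's explicit chain of inequalities is just a concrete way of verifying the same convergence after the substitution. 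Either way the hypothesis $a<e^{-1}$ is used exactly as you say, to keep $\log(-\log r)$ real-valued and nonnegative on $(0,a]$.
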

\begin{proof}[Proof of Lemma \ref{f-lem4.6}]
We put $t=-\log r$. 
Then we can easily check 
\begin{equation*}
\int _0^a r\log (-\log r)dr =\int _{-\log a} ^\infty 
e^{-2t} (\log t) dt\leq \int ^{\infty}_{-\log a} t e^{-2t} dt 
\leq \int^\infty_{-\log a}e^{-t}dt=a<\infty 
\end{equation*} 
by direct calculations. 
\end{proof}

We put 
$$
h=ge^{-2\varphi}. 
$$ 
Then $h$ is a singular hermitian metric on $\mathscr L$ 
in the sense of Definition \ref{f-def2.2}. 
The following lemma is essentially contained 
in \cite[Propositions 5.7 and 5.15]{kollar}. 

\begin{lem}\label{f-lem4.7}
Let $P$ be an arbitrary point of $X$. 
Then $\partial\varphi_0$ and $\bar\partial \partial \varphi_0$ are 
almost bounded in a neighborhood 
of $P\in X$. 
More precisely, 
there exists $\Delta_a^n$ on $X$ centered at $P$ for some $0<a<e^{-1}$ 
such that $\varphi_0$, $\partial \varphi_0$, and 
$\bar\partial \partial \varphi_0$ are smooth 
on $(\Delta_a^*)^n$ and that 
$\partial \varphi_0$ and 
$\bar\partial \partial \varphi_0$ are 
almost bounded on $\Delta^n_a$. 
\end{lem}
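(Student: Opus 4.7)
My plan is to reduce Lemma \ref{f-lem4.7} to the asymptotic estimates on the Hodge metric and its first two derivatives from \cite[Section 5]{kollar}; the statement itself credits \cite[Propositions 5.7 and 5.15]{kollar} as the essential inputs. First, because $g$ is smooth near $P$, in any holomorphic frame $e$ of $\mathscr L$ on a small polydisc $\Delta_a^n$ centered at $P$ the forms $\partial\log g(e,e)$ and $\bar\partial\partial\log g(e,e)$ are smooth. The identity $h_0=ge^{-2\varphi_0}$ then gives $-2\partial\varphi_0=\partial\log h_0(e,e)-\partial\log g(e,e)$ and an analogous identity for $\bar\partial\partial\varphi_0$, so it is enough to show that $\partial\log h_0(e,e)$ and $\bar\partial\partial\log h_0(e,e)$ are almost bounded on $\Delta_a^n$ in the sense of Definition \ref{f-def4.2}.

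Next, I would rewrite $h_0(e,e)$ using the CKS frame $\{e_i(z)\}_{i=1}^{r}$ of $\mathscr V|_{\Delta_a^n}$ and the lift $f(z)=\sum_i f_i(z)e_i(z)$ of $e(z)$ introduced in \eqref{f-eq42}. Since $h_0$ is the quotient metric on $\mathscr L_0=F^b_0/\mathscr S_0$, choosing a local frame $s_1,\ldots,s_k$ of $\mathscr S_0$ and setting $Q_{\mathscr S}=\bigl(q_0(s_\alpha,s_\beta)\bigr)$ yields
$$h_0(e,e)=q_0(f,f)-\sum_{\alpha,\beta}(Q_{\mathscr S}^{-1})_{\alpha\beta}\,q_0(f,s_\alpha)\,q_0(s_\beta,f).$$
This presents $\log h_0(e,e)$ as an explicit rational expression in the holomorphic data $f_i,\overline{f_i}$ and in the Gram matrix entries $q_0(e_i,e_j)$ of the CKS frame, with denominators built from $\det Q_{\mathscr S}$. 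Differentiating once or twice therefore produces combinations of $q_0(e_i,e_j)$, $\partial q_0(e_i,e_j)$, $\bar\partial\partial q_0(e_i,e_j)$, of the holomorphic $f_i$, and of $(\det Q_{\mathscr S})^{-1}$.

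The crucial step is then to invoke \cite[Propositions 5.7 and 5.15]{kollar}, which, building on \cite{cks}, control exactly these Gram matrix entries and their first two logarithmic derivatives, and show that the associated forms are nearly bounded on $(\Delta_a^*)^n$ and almost bounded on $\Delta_a^n$ after the bimeromorphic modification $p\colon W\to\Delta_a^n$ from Definition \ref{f-def4.2}. Since almost bounded forms are closed under sums, products, and wedge products, the required almost boundedness of $\partial\log h_0(e,e)$ and $\bar\partial\partial\log h_0(e,e)$ follows. As a cross-check, one may alternatively read the second derivative off the curvature formula in Lemma \ref{f-lem2.8}, reducing the $\bar\partial\partial\varphi_0$ statement to the almost boundedness of $(\theta^b_0)^*\wedge\theta^b_0|_{\mathscr L_0}$ and $A\wedge A^*$ written in the CKS frame, which are again covered by the same estimates.

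The main obstacle is the bookkeeping inside the third step: differentiation of $q_0(e_i,e_j)$ brings down factors of $1/z_j$, and differentiation of $(\det Q_{\mathscr S})^{-1}$ can amplify denominators involving products of powers of $-\log|z_j|$, so that verifying that the combined expressions still satisfy condition $(\mathbf{b})$ of Definition \ref{f-def4.2} is exactly what the proofs of \cite[Propositions 5.7 and 5.15]{kollar} carry out. My only original task would then be to match the rational combinations above to the class of expressions handled there, and to arrange a single bimeromorphic modification $p\colon W\to\Delta_a^n$ on which both $p^*\partial\varphi_0$ and $p^*\bar\partial\partial\varphi_0$ become nearly bounded.
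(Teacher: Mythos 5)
Your overall strategy — reduce to \cite[Propositions~5.7 and~5.15]{kollar} via the quotient metric in a CKS frame — is close in spirit to the paper's, but the reduction you propose has a genuine gap, and the paper uses a cleaner device that you are one algebraic identity away from.

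The paper's proof never expands the quotient metric into individual Gram-matrix entries. Instead, it chooses compatible smooth reference metrics $g_1,g_2,g$ on $\mathscr S,\ F^b,\ \mathscr L$ with $g$ the orthogonal complement of $g_1$ in $g_2$, writes the \emph{determinant} Hodge metrics as $\det h_1=\det g_1\cdot e^{-\varphi_1}$ and $\det h_2=\det g_2\cdot e^{-\varphi_2}$, and observes that $\varphi_0=\varphi_2-\varphi_1$ (up to the normalization of the weights). Since $\det F^b\cong\det\mathscr S\otimes\mathscr L$ and the quotient metric is exactly the ratio of the two determinant metrics, this identity is immediate. Kollár's Propositions~5.7 and~5.15 are stated for precisely such determinant Hodge metrics, and they give that $\partial\varphi_i$ and $\bar\partial\partial\varphi_i$ are almost bounded, together with a common composite of permissible blow-ups $p\colon W\to\Delta_a^n$; the conclusion for $\varphi_0$ follows by linearity.

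Your approach keeps the Schur-complement formula
$h_0(e,e)=q_0(f,f)-\sum_{\alpha,\beta}(Q_{\mathscr S}^{-1})_{\alpha\beta}\,q_0(f,s_\alpha)\,q_0(s_\beta,f)$
unsimplified, and then asserts that Propositions~5.7 and~5.15 ``control exactly these Gram matrix entries and their first two logarithmic derivatives.'' That is not what those propositions say: they control $\partial\log\det Q$ and $\bar\partial\partial\log\det Q$ for a determinant Hodge metric $Q$, not the individual entries $q_0(e_i,e_j)$ nor arbitrary rational combinations of them. Passing from almost boundedness of determinant quantities to your rational expression is not a matter of closure under sums and wedges, because one is taking $\log$ of a rational function whose numerator and denominator degenerate along $D$, and then differentiating; this is where the actual work would lie. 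The irony is that your formula already contains the resolution: by the Schur-complement determinant identity, the right-hand side equals $\det(\text{Gram of }f,s_1,\ldots,s_k)/\det(\text{Gram of }s_1,\ldots,s_k)$, which is exactly $\det h_2/\det h_1$ in the chosen frames. Taking $\log$ then gives $\log h_0(e,e)=\log\det h_2-\log\det h_1$, which is the paper's reduction. Without making this simplification, the ``matching'' you defer to the final paragraph is the content of the lemma, not a bookkeeping afterthought, so as written the proof is incomplete.

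One more technical point: in arranging a single $p\colon W\to\Delta_a^n$ on which both $p^*\partial\varphi_0$ and $p^*\bar\partial\partial\varphi_0$ are nearly bounded, you should, as the paper does, invoke the existence of a composite of permissible blow-ups (see \cite[5.9]{kollar} and \cite[Theorem~3.5.1]{wlo}) that works simultaneously for $\det\mathscr S$ and $\det F^b$; this is straightforward once the determinant reduction is in place, but your version would need the analogous common resolution for a larger family of quantities.
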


\begin{proof}[Proof of Lemma \ref{f-lem4.7}]
We consider the following short exact sequence: 
$$
0\to \mathscr S\to F^b\to \mathscr L\to 0. 
$$
We fix smooth hermitian metrics $g_1, g_2$ and $g$ on $\mathscr S, F^b$, 
and $\mathscr L$, respectively. 
We assume that $g_1=g_2|_{\mathscr S}$ and 
that 
$g$ is the orthogonal complement of $g_1$ in $g_2$. 
Let $h_1$ and $h_2$ be the induced 
Hodge metrics on $\mathscr S_0=\mathscr S|_{X_0}$ 
and 
$F^b_0$, respectively. 
By applying the calculations in \cite[Section 5]{kollar} 
to $\det \mathscr S$ and $\det F^b$, we obtain 
$\det h_1=\det g_1\cdot e^{-\varphi_1}$ and 
$\det h_2=\det g_2\cdot e^{-\varphi_2}$ on $X_0$ such that 
$\partial \varphi_1, \bar\partial \partial \varphi_1, 
\partial \varphi_2$, and $\bar\partial \partial \varphi_2$ are 
almost bounded in a neighborhood of $P$. 
More precisely, we can take a polydisc $\Delta^n_a$ centered at 
$P$ for some $0<a<e^{-1}$ and a composite of permissible 
blow-ups $p:W\to \Delta^n_a$ (see 
\cite[5.9]{kollar} and \cite[Theorem 3.5.1]{wlo}) such that 
$\varphi_1$ and $\varphi_2$ are smooth 
on $(\Delta^*_a)^n$ and that 
every $w\in W$ has a neighborhood $\Delta^n_{a'_w}$ centered at 
$w\in W$ for some 
$0<a'_w<e^{-1}$ where $p^*(\partial \varphi_1)$, $p^*(\overline \partial 
\partial \varphi_1$), $p^*(\partial \varphi_2)$, and 
$p^*(\overline \partial \partial \varphi_2)$ are nearly bounded on 
$\Delta^n_{a'_w}$.  
For the details, see \cite[Propositions 5.7 and 5.15]{kollar}. 
By construction, $\varphi_0=-\varphi_1+\varphi_2$. 
Therefore, $\varphi_0$ is smooth on $(\Delta^*_a)^n$, and 
$p^*(\partial \varphi_0)$ and $p^*(\overline \partial \partial \varphi_0)$ 
are nearly bounded on $\Delta^n_{a'_w}$. 
This means that $\varphi_0$, $\partial \varphi_0$, 
and $\overline \partial \partial \varphi_0$ are smooth 
on $(\Delta^*_a)^n$ and that 
$\partial \varphi_0$ and $\overline \partial \partial \varphi_0$ are almost 
bounded on $\Delta^n_a$.    
\end{proof}

We prepare an easy lemma. 

\begin{lem}\label{f-lem4.8}
We assume $0<a<e^{-1}$. 
We have 
$$
\int _0^a \frac{\log (-\log r)}{-\log r} dr <\infty. 
$$
\end{lem}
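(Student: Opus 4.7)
The plan is to mimic the change of variables used in the proof of Lemma \ref{f-lem4.6}. Set $t = -\log r$, so that $r = e^{-t}$, $dr = -e^{-t}\,dt$, and the interval $(0,a]$ in $r$ corresponds to $[-\log a, \infty)$ in $t$. Note that because $a < e^{-1}$, we have $-\log a > 1$ throughout the new interval, which is what makes the elementary estimates below clean.

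After the substitution the integral becomes
\[
\int_0^a \frac{\log(-\log r)}{-\log r}\,dr = \int_{-\log a}^{\infty} \frac{\log t}{t}\,e^{-t}\,dt.
\]
On the interval $t \geq 1$ the elementary inequality $\log t \leq t$ gives $\log t / t \leq 1$, so the integrand is dominated by $e^{-t}$. Therefore
\[
\int_{-\log a}^{\infty} \frac{\log t}{t}\,e^{-t}\,dt \leq \int_{-\log a}^{\infty} e^{-t}\,dt = a < \infty,
\]
exactly as in Lemma \ref{f-lem4.6}. There is no real obstacle here: the only mild point is checking that $-\log a > 1$ so that the bound $\log t/t \leq 1$ applies on the whole range of integration, which is guaranteed by the hypothesis $a < e^{-1}$.
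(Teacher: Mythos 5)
Your proof is correct and follows exactly the same route as the paper: the substitution $t=-\log r$ followed by the bound $\frac{\log t}{t}\le 1$ on $t\ge 1$, yielding $\int_{-\log a}^\infty e^{-t}\,dt=a<\infty$. You have merely made explicit the role of the hypothesis $a<e^{-1}$ (to ensure $-\log a>1$), which the paper leaves implicit.
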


\begin{proof}[Proof of Lemma \ref{f-lem4.8}]
We put $t=-\log r$. 
Then $r=e^{-t}$. 
We have 
\begin{equation*}
\begin{split}
\int ^a_0 \frac{\log (-\log r)}{-\log r} dr &
=\int^{-\log a}_{\infty} \frac{\log t}{t}(-e^{-t})dt \\ 
&= \int ^{\infty}_{-\log a} \frac{\log t}{t} e^{-t}dt \\ 
&\leq \int ^{\infty}_{-\log a} e^{-t} dt =a<\infty. 
\end{split}
\end{equation*}
This is what we wanted. 
\end{proof}

The following lemma is missing in \cite[Section 5]{kollar}. 
This is because it is sufficient to 
consider the asymptotic behaviors of $\partial \varphi_0$ and 
$\bar\partial \partial \varphi_0$ for the purpose of \cite[Section 5]{kollar}. 

\begin{lem}\label{f-lem4.9}
Let $\eta$ be a smooth $(2n-1)$-form on $\Delta_a^n$ with 
compact support. 
We put $$
S_{\vec{\varepsilon}} =\{ z\in \Delta_a^n \, |\, 
|z_i|\geq \varepsilon ^i \text{\ for every $i$ and\ }  
|z_{i_0}|=\varepsilon^{i_0} \text{\ for some $i_0$}\} 
$$ 
where $\vec{\varepsilon}=(\varepsilon^1, \ldots, \varepsilon^n)$ 
with $\varepsilon ^i>0$ for every $i$. 
Then there is a sequence $\{\vec{\varepsilon}_k\}$ with $
\vec{\varepsilon}_k\searrow 0$ 
such that  
$$
\lim _{k\to \infty} \int _{S_{\vec{\varepsilon}_k}} \varphi \eta=0. 
$$
\end{lem}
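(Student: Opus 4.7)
The plan is to show the stronger statement that for \emph{any} sequence $\vec{\varepsilon}_k \searrow 0$ the integrals $\int_{S_{\vec{\varepsilon}_k}}\varphi\eta$ tend to zero; the existence claim then follows immediately. I would organize the argument in three steps.

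First, I would invoke the two-sided bound on $\varphi_0$ derived in \ref{f-say4.4}, which, after rewriting the right-hand side as a sum of logarithms, yields
\[
|\varphi(z)| \le C_1 + C_2\sum_{j=1}^n\log(-\log|z_j|) \quad \text{on } (\Delta_a^*)^n.
\]
Since $\varphi=\varphi_0$ on $X_0$ and $S_{\vec{\varepsilon}}\subset X_0$, this estimate applies throughout.

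Second, I would decompose $S_{\vec{\varepsilon}}=\bigcup_{i_0=1}^n S^{i_0}_{\vec{\varepsilon}}$ into the $n$ faces $S^{i_0}_{\vec{\varepsilon}}=\{|z_{i_0}|=\varepsilon^{i_0},\ |z_j|\ge\varepsilon^j \text{ for } j\neq i_0\}$ and handle each face separately (the pairwise intersections have measure zero). The key geometric observation is that if we parametrize $z_{i_0}=\varepsilon^{i_0}e^{i\theta}$, then $dx_{i_0}|_{S^{i_0}_{\vec{\varepsilon}}} = -\varepsilon^{i_0}\sin\theta\,d\theta$ and $dy_{i_0}|_{S^{i_0}_{\vec{\varepsilon}}} = \varepsilon^{i_0}\cos\theta\,d\theta$. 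A $(2n-1)$-form restricted to this $(2n-1)$-manifold must contribute exactly one of $dx_{i_0}$ or $dy_{i_0}$ (a term carrying $dx_{i_0}\wedge dy_{i_0}$ restricts to zero), so the restriction of $\eta$ has the form $h(z,\theta)\,d\theta\wedge\prod_{j\neq i_0}dx_j\wedge dy_j$ with $|h|\le C\varepsilon^{i_0}$.

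Third, combining these ingredients via Fubini on the transverse variables, I would bound
\[
\Bigl|\int_{S^{i_0}_{\vec{\varepsilon}}}\varphi\eta\Bigr| \le K\varepsilon^{i_0}\bigl(1+\log(-\log\varepsilon^{i_0})\bigr),
\]
using that $\log(-\log|z_{i_0}|)$ is constant on the face, that the $j\neq i_0$ contributions are dominated by $\int_0^a r\log(-\log r)\,dr<\infty$ (Lemma \ref{f-lem4.6}), and that the compact support of $\eta$ bounds the remaining transverse volume. Summing over $i_0$ and invoking $\lim_{\varepsilon\to 0^+}\varepsilon\log(-\log\varepsilon)=0$ (via the substitution $t=-\log\varepsilon$) gives the desired conclusion.

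The main obstacle is really only the form-theoretic bookkeeping in the second step, namely verifying that the restriction $\eta|_{S^{i_0}_{\vec{\varepsilon}}}$ genuinely picks up the crucial factor of $\varepsilon^{i_0}$ from the geometry of the circle $|z_{i_0}|=\varepsilon^{i_0}$. Once that is secured, the rest is a direct combination of the growth estimate on $\varphi$ and Lemma \ref{f-lem4.6}, and no clever choice of the sequence $\vec{\varepsilon}_k$ is required — any decreasing sequence works.
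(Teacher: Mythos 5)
Your proof is correct, and in fact it establishes a strictly stronger statement than the lemma claims: the slice integrals tend to zero along \emph{every} sequence $\vec{\varepsilon}_k\searrow 0$. The key computation is sound: a smooth $(2n-1)$-form with compact support has bounded Euclidean coefficients, a basis $(2n-1)$-form omits exactly one of $dx_j,dy_j$, and any term surviving restriction to $\{|z_{i_0}|=\varepsilon^{i_0}\}$ contains exactly one of $dx_{i_0},dy_{i_0}$ — hence picks up a Jacobian factor $\varepsilon^{i_0}$ from the circle parametrization. Combined with the two-sided bound $|\varphi|\le C_1+C_2\sum_j\log(-\log|z_j|)$ and Lemma~\ref{f-lem4.6}, this yields $\bigl|\int_{S^{i_0}_{\vec{\varepsilon}}}\varphi\eta\bigr|\le K\varepsilon^{i_0}(1+\log(-\log\varepsilon^{i_0}))\to 0$.

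This is, however, a genuinely different route from the paper's. The paper follows Koll\'ar's strategy in Proposition~5.16: it chooses a $1$-form $\omega$ of unit Poincar\'e length orthogonal to the family of hypersurfaces, uses the associated flow $f_t$ to convert the finite volume integral $\int_{\Delta_a^n}(\omega\wedge\varphi\eta)^\pm<\infty$ (finiteness via Lemmas~\ref{f-lem4.6} and~\ref{f-lem4.8}) into $\int_0^\infty g^\pm(t)\,dt<\infty$, and then invokes the measure-theoretic fact that an integrable nonnegative function has $\liminf=0$ along some $t_k\nearrow\infty$. This only produces \emph{some} good sequence, which is why the lemma is stated existentially. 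Your direct slice estimate is cleaner and gives more. What the paper's flow argument buys, though, is uniformity with Lemma~\ref{f-lem4.11}, where $\eta$ is merely nearly bounded: its coefficients in the Euclidean frame can blow up like $(z_{i_0}\log|z_{i_0}|)^{-1}$, so the naive Jacobian gain of $\varepsilon^{i_0}$ is exactly cancelled and the pointwise slice bound no longer tends to zero — there one really does need the ``integrability $\Rightarrow$ a good subsequence'' reasoning. So your approach is a valid and more elementary substitute for Lemma~\ref{f-lem4.9} specifically, but would not carry over unchanged to Lemma~\ref{f-lem4.11}.
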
 
\begin{proof}[Proof of Lemma \ref{f-lem4.9}]
We put 
$$
S_{\varepsilon, 1}=\{z\in \Delta_a^n \, | \, |z_1|=\varepsilon\}. 
$$ 
Then it is sufficient to prove that 
$$
\lim _{k\to \infty} \int _{S_{\varepsilon_k, 1}}\varphi\eta=0 
$$ for some sequence $\{\varepsilon _k\}$ with $\varepsilon _k\searrow 
0$. 
Without loss of generality, we may assume that $\eta$ is a real 
$(2n-1)$-form by considering $\frac{\eta+\bar\eta}{2}$ and 
$\frac{\eta-\bar\eta}{2\sqrt{-1}}$. 
Let us consider the real $1$-form 
$$
\omega =\frac{1}{\left(2 (-\log |z_1|)^2\right)^{1/2}}\left( 
\frac{dz_1}{z_1}+\frac{d\bar z_1}{\bar z_1}\right). 
$$
This form is orthogonal to the foliation 
$S_{\varepsilon, 1}$ 
and has length one everywhere by the Poincar\'e metric. 
We consider the vector field 
$$
v=\frac{1}{\left(2 (-\log |z_1|)^2\right)^{1/2}}\left( z_1 (\log |z_1|)^2
\frac{\partial}{\partial z_1}+\bar z_1(\log |z_1|)^2\frac{\partial}{\partial 
\bar z_1}\right),  
$$ which is dual to $\omega$. 
We fix $\varepsilon$ with $0<\varepsilon <a<e^{-1}$.  
We consider the flow $f_t$ on $\Delta_a^*\times \Delta_a^{n-1}$ 
corresponding to $-v$. 
We can explicitly write 
$$
f_t: [0, \infty)\times S_{\varepsilon, 1}\to 
\Delta_a^*\times \Delta_a^{n-1}
$$ 
by 
\begin{equation}\label{f-eq43}
(t, (w, z_2, \cdots, z_n))\mapsto 
\left( \frac{w}{\varepsilon}\exp \left(-\exp  \left( 
\frac{1}{\sqrt{2}}t+\log (-\log \varepsilon)\right)\right), 
z_2, \cdots, z_n\right). 
\end{equation} 
Therefore, by using the flow $f_t$, 
we can parametrize $\{z\in \mathbb C \,|\, 0< |z|\leq \varepsilon\} \times 
\Delta_a^{n-1}$ by $[0, \infty)\times S_{\varepsilon, 1}$. 
If we write 
$$\omega\wedge \varphi\eta=f(z)dV,$$ 
where $dV$ is the standard 
volume form of $\mathbb C^n$, then 
we put $$(\omega\wedge \varphi\eta)^+=\max \{ f(z), 0\}dV$$ and 
$$(\omega\wedge \varphi\eta)^-=
(\omega\wedge \varphi\eta)^+-\omega\wedge \varphi\eta. $$ 
We can easily see that 
$$
\int _{\Delta_a^n}(\omega\wedge \varphi\eta)^\pm<\infty  
$$ by Lemmas \ref{f-lem4.6} and \ref{f-lem4.8}. 
Therefore, we obtain 
\begin{equation}\label{f-eq44}
\int_{[0, \infty)\times S_{\varepsilon, 1}}(\omega\wedge 
\varphi\eta)^\pm<\infty. 
\end{equation}
The image of $\{t\}\times S_{\varepsilon, 1}$ in $\Delta_a^n$ is 
$S_{\varepsilon(t), 1}$ with $0<\varepsilon(t)\leq \varepsilon$. 
By \eqref{f-eq43}, we have 
$$ 
\varepsilon(t)=\exp \left(-\exp\left(\frac{1}{\sqrt{2}}t+\log (-\log \varepsilon)
\right)\right). 
$$ 
We note that 
$\omega$ is orthogonal to $S_{\varepsilon(t), 1}$ and 
unitary. 
More explicitly, we can directly check 
$$
f_t^*\omega=-dt. 
$$
Therefore, the above integral \eqref{f-eq44} 
transforms to 
$$
\int_{[0, \infty)}\left(\int_{S_{\varepsilon(t), 1}}(\varphi\eta)^\pm\right) dt<\infty. 
$$ 
Note that $(\varphi\eta)^\pm$ is defined by 
$$
f_t^*(\omega\wedge \varphi\eta)^\pm=-dt \wedge 
(\varphi\eta)^\pm. 
$$
This can happen only if 
$$
\int _{S_{\varepsilon(t_k), 1}}(\varphi \eta)^\pm\to 0
$$ 
for some sequence $\{t_k\}$ with $t_k\nearrow \infty$. 
This implies that we can take a sequence $\{\varepsilon_k\}$ with 
$\varepsilon _k \searrow 0$ such that 
$$
\lim_{k\to \infty}\int _{S_{\varepsilon_k, 1}}\varphi\eta=0. 
$$ 
Therefore, we have a desired sequence $\{\vec{\varepsilon}_k\}$. 
\end{proof}

\begin{rem}\label{f-rem4.10}
The real $1$-form $\omega$ and the corresponding flow $f_t$ in the 
proof of Lemma \ref{f-lem4.9} are different from the 
$1$-form $\omega$ and 
the flow $v_t$ in the proof of \cite[Proposition 5.16]{kollar}, respectively. 
\end{rem}

By combining the proof of \cite[Proposition 5.16]{kollar} and 
the proof of Lemma \ref{f-lem4.9}, we have: 

\begin{lem}\label{f-lem4.11}
Let $\eta$ be a nearly bounded $(2n-1)$-form on $\Delta_a^n$ with 
compact support. 
Then there exists a sequence 
$\{\vec{\varepsilon'}_k\}$ with $\vec{\varepsilon'}_k\searrow 0$ such that 
$$
\lim_{\vec{\varepsilon'}_k\searrow 0}\int _{S_{\vec{\varepsilon'}_k}}
\eta=0. 
$$
\end{lem}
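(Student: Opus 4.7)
The plan is to adapt the flow argument from Lemma \ref{f-lem4.9}, but with the integrability input coming from near boundedness of $\eta$ itself rather than from Lemmas \ref{f-lem4.6} and \ref{f-lem4.8}. First I would decompose $S_{\vec{\varepsilon}}$ into the $n$ pieces
$$
S^{(i)}_{\vec{\varepsilon}}=\{z\in\Delta_a^n\,|\,|z_j|\geq\varepsilon^j\text{ for all }j,\ |z_i|=\varepsilon^i\},
$$
and observe that it suffices to produce, for each $i$ separately, a sequence along which $\int_{S^{(i)}_{\vec{\varepsilon}}}\eta\to 0$; a diagonal argument then combines these into a single $\vec{\varepsilon'}_k\searrow 0$.

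For a fixed index, say $i=1$, I would copy the setup of Lemma \ref{f-lem4.9}: use the real $1$-form
$$
\omega=\frac{1}{\left(2(-\log|z_1|)^2\right)^{1/2}}\left(\frac{dz_1}{z_1}+\frac{d\bar z_1}{\bar z_1}\right),
$$
which is unitary in the Poincar\'e metric and orthogonal to the foliation $\{|z_1|=\mathrm{const}\}$, together with the corresponding flow $f_t$ that parametrizes $\{0<|z_1|\leq\varepsilon\}\times\Delta_a^{n-1}$ by $[0,\infty)\times S_{\varepsilon,1}$. Writing $\omega\wedge\eta=f(z)\,dV$ and splitting into positive and negative parts, the Fubini computation of Lemma \ref{f-lem4.9} goes through verbatim, so the lemma reduces to showing that $\int_{\Delta_a^n}|\omega\wedge\eta|<\infty$.

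This is the step where I would invoke the proof technique of \cite[Proposition 5.16]{kollar}. Since $\omega$ expressed in the Poincar\'e coframe $\{dz_1/(z_1\log|z_1|),d\bar z_1/(\bar z_1\log|z_1|),\dots\}$ has bounded (in fact constant) coefficients, and near boundedness is preserved under wedge products by Definition \ref{f-def4.2}, the top form $\omega\wedge\eta$ is nearly bounded on $\Delta_a^n$ with compact support. The integrability of a nearly bounded top form against the standard Euclidean volume on $\Delta_a^n$ is precisely the content of the estimate behind \cite[Proposition 5.16]{kollar}: in case~$(\mathbf{a})$ it follows because the Poincar\'e volume form $\prod_i dz_i\wedge d\bar z_i/(|z_i|^2(\log|z_i|)^2)$ is integrable on $(\Delta_a^*)^n$, and in case~$(\mathbf{b})$ the extra logarithmic factor $(-\log|z_m|)^k$ is absorbed by the exponentially small region $|z_1|\leq\exp(-|z_m|^{-\varepsilon})$.

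Given $\int_{\Delta_a^n}|\omega\wedge\eta|<\infty$, the Fubini step exactly as in the end of the proof of Lemma \ref{f-lem4.9} produces $t_k\nearrow\infty$, hence $\varepsilon_k\searrow 0$, with $\int_{S_{\varepsilon_k,1}}\eta\to 0$; iterating this over the remaining coordinate directions (possibly refining the sequence at each step) yields the desired $\vec{\varepsilon'}_k$. The main obstacle is the integrability assertion $\int_{\Delta_a^n}|\omega\wedge\eta|<\infty$ in case~$(\mathbf{b})$ of near boundedness: the logarithmic growth permitted on the exceptional region must be checked to remain integrable against Euclidean volume, which requires exactly the explicit radial estimate given in the proof of \cite[Proposition 5.16]{kollar} and mimics, in a more intricate form, the one-variable computations of Lemmas \ref{f-lem4.6} and \ref{f-lem4.8}.
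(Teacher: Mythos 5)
The paper itself does not write out a proof of Lemma \ref{f-lem4.11}; it simply refers the reader to the proof of \cite[Proposition 5.16]{kollar} together with the proof of Lemma \ref{f-lem4.9}, and your argument is precisely the combination the authors have in mind. The key observations you make — that $\omega$ has constant coefficients in the Poincar\'e coframe, hence $\omega\wedge\eta$ is nearly bounded by the stability of near boundedness under wedge products (Definition \ref{f-def4.2}), that the absolute integrability of the nearly bounded top form $\omega\wedge\eta$ is supplied by the estimates underlying \cite[Proposition 5.16]{kollar}, and that the flow/Fubini step of Lemma \ref{f-lem4.9} together with a diagonal choice over the $n$ coordinate directions then produces $\vec{\varepsilon'}_k$ — are exactly the intended route, and the argument is correct.
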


We leave the details of Lemma \ref{f-lem4.11} to 
the reader (see the proof of \cite[Proposition 5.16]{kollar} and 
the proof of Lemma \ref{f-lem4.9}). 

By Lemmas \ref{f-lem4.7} and \ref{f-lem4.9}, we have the following 
lemma. 

\begin{lem}\label{f-lem4.12}
Let $\eta$ be a smooth 
$(2n-2)$-form on $\Delta_a^n$ with compact support. 
We further assume that $\partial \varphi_0$ and $\bar\partial \partial 
\varphi_0$ are nearly bounded on $\Delta^n_a$. 
Then 
$$
\int _{\Delta_a^n} \varphi\partial \bar\partial \eta=
\int _{\Delta_a^n}\partial \bar\partial \varphi_0\wedge \eta. 
$$
Note that the right hand side is an improper integral. 
Therefore, we obtain 
$$
\int _{\Delta_a^n} \partial \bar\partial\varphi\wedge \eta=
\int _{\Delta_a^n}\partial \bar\partial \varphi_0\wedge \eta,  
$$ 
where we take $\partial \bar\partial$ of $\varphi$ as a current. 
\end{lem}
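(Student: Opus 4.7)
The plan is to apply Stokes' theorem on the region $U_{\vec{\varepsilon}} = \Delta_a^n \setminus N_{\vec{\varepsilon}}$, where $N_{\vec{\varepsilon}} = \{z \in \Delta_a^n \mid |z_i| < \varepsilon^i \text{ for some } i\}$ is the tubular neighborhood of the normal crossing divisor bounded by $S_{\vec{\varepsilon}}$. On $U_{\vec{\varepsilon}}$ we have $\varphi = \varphi_0$ smooth, so the usual integration by parts is available; the task is to let $\vec{\varepsilon} \searrow 0$ along a carefully chosen sequence while showing that the boundary contributions vanish.

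Without loss of generality $\eta$ has pure bidegree $(n-1, n-1)$, since only that component survives under $\partial \bar{\partial}$. Stokes' theorem on $U_{\vec{\varepsilon}}$, applied twice to transfer $\partial$ and then $\bar{\partial}$ from $\eta$ onto $\varphi_0$, produces the interior term $\int_{U_{\vec{\varepsilon}}} \partial \bar{\partial} \varphi_0 \wedge \eta$ together with two boundary terms supported on $S_{\vec{\varepsilon}}$: one of the form $\int_{S_{\vec{\varepsilon}}} \varphi_0\, \bar{\partial} \eta$ and another of the form $\int_{S_{\vec{\varepsilon}}} \partial \varphi_0 \wedge \eta$. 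Bidegree considerations kill the would-be cross terms $\bar{\partial} \varphi_0 \wedge \bar{\partial} \eta$ and $\partial \varphi_0 \wedge \partial \eta$, so no extra contributions appear.

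Next I would pass to the limit along a common subsequence $\vec{\varepsilon}_k \searrow 0$. The left-hand side $\int_{U_{\vec{\varepsilon}_k}} \varphi_0\, \partial \bar{\partial} \eta$ converges to $\int_{\Delta_a^n} \varphi \, \partial \bar{\partial} \eta$ by dominated convergence, using that $\varphi$ is locally integrable (Lemma \ref{f-lem4.5}) and $\partial \bar{\partial} \eta$ is smooth with compact support. The boundary term $\int_{S_{\vec{\varepsilon}_k}} \varphi_0\, \bar{\partial} \eta$ tends to zero along a suitable subsequence by Lemma \ref{f-lem4.9}, since $\bar{\partial} \eta$ is smooth and compactly supported. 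The second boundary term $\int_{S_{\vec{\varepsilon}_k}} \partial \varphi_0 \wedge \eta$ also tends to zero along a further subsequence by Lemma \ref{f-lem4.11}, because $\partial \varphi_0 \wedge \eta$ is nearly bounded by the hypothesis on $\partial \varphi_0$ together with the stability of near boundedness under wedge product with smooth forms noted in Definition \ref{f-def4.2}. A common refinement of the two sequences kills both boundary terms simultaneously, and the remaining interior integral $\int_{U_{\vec{\varepsilon}_k}} \partial \bar{\partial} \varphi_0 \wedge \eta$ is then forced to converge to a limit, which we interpret as the improper integral on the right of the first identity.

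Finally, the second displayed equation is a restatement of the first: by the very definition of the distributional derivative of $\varphi$, the pairing $\int \partial \bar{\partial} \varphi \wedge \eta$ of the current $\partial \bar{\partial} \varphi$ against the test form $\eta$ equals $\int \varphi \, \partial \bar{\partial} \eta$, so the two identities are equivalent. In my view the only genuine difficulty is coordinating the two subsequences produced by Lemmas \ref{f-lem4.9} and \ref{f-lem4.11} so that both boundary integrals vanish simultaneously; the integration-by-parts bookkeeping itself is essentially automatic once the bidegree cancellations are noted.
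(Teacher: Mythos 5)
Your proposal follows the same route as the paper's proof: exhaust $\Delta_a^n$ by the sets $V_{\vec{\varepsilon}}$ (your $U_{\vec{\varepsilon}}$), integrate by parts twice, observe that the bidegree cancellations leave exactly the two boundary terms $\int_{S_{\vec{\varepsilon}}}\varphi_0\,\bar\partial\eta$ and $\int_{S_{\vec{\varepsilon}}}\partial\varphi_0\wedge\eta$, and kill them via Lemmas~\ref{f-lem4.9} and~\ref{f-lem4.11}. One small remark on the final step: you write that the interior integral is ``forced to converge to a limit, which we interpret as the improper integral,'' but to identify that limit with the sequence-independent improper integral $\int_{\Delta_a^n}\partial\bar\partial\varphi_0\wedge\eta$ you should invoke the near boundedness of $\partial\bar\partial\varphi_0\wedge\eta$ (i.e.~\cite[Proposition 5.16 (i)]{kollar}), which is exactly what the paper does; this also dissolves the ``coordination'' worry you raise, since the interior improper integrals converge along \emph{any} sequence $\vec{\varepsilon}_k\searrow 0$, so the two boundary terms are free to use different sequences. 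Your alternative of extracting a simultaneous sequence (by applying the measure-theoretic argument of Lemma~\ref{f-lem4.9} to the sum of the relevant nonnegative integrands) also works and is not a genuine obstacle.
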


\begin{proof}[Proof of Lemma \ref{f-lem4.12}]
We put 
$$
V_{\vec{\varepsilon}_k}=\{z\in \Delta_a^n \, |\, 
|z_i|\geq \varepsilon_k^i {\text{\ for 
every $i$}}\}
$$ where 
$\vec{\varepsilon}_k=(\varepsilon _k^1, \cdots, 
\varepsilon _k^n)$ with $\varepsilon _k^i>0$ for every $i$. 
Then 
\begin{equation*}
\begin{split}
\int _{\Delta_a^n} \varphi\partial \bar\partial \eta  &= 
\lim_{\vec{\varepsilon}_k\searrow 0} \int _{V_{\vec{\varepsilon}_k}} \varphi_0 
\partial \bar\partial \eta \\ 
&=\lim_{\vec{\varepsilon}_k\searrow 0} \int _{V_{\vec{\varepsilon}_k}} 
d(\varphi_0\bar\partial \eta)
-\lim_{\vec{\varepsilon'}_k\searrow 0} \int _{V_{\vec{\varepsilon'}_k}} 
\partial \varphi_0\wedge \bar\partial \eta
\\ 
& =\lim_{\vec{\varepsilon}_k\searrow 0} \int _{S_{\vec{\varepsilon}_k}} 
\varphi_0\bar\partial \eta+
\lim_{\vec{\varepsilon'}_k\searrow 0} \int _{V_{\vec{\varepsilon'}_k}} 
d(\partial \varphi_0\wedge \eta)
-\lim_{\vec{\varepsilon'}_k\searrow 0} \int _{V_{\vec{\varepsilon'}_k}} 
\bar\partial \partial \varphi_0 \wedge \eta
\\ & = \lim_{\vec{\varepsilon'}_k\searrow 0} \int _{V_{\vec{\varepsilon'}_k}} 
\partial \bar\partial \varphi_0 \wedge \eta
\\ 
&= \int _{\Delta_a^n} \partial \bar\partial \varphi_0 \wedge \eta. 
\end{split}
\end{equation*}
The first equality holds since $\varphi$ is locally integrable. 
The second one follows from integration by parts. 
Note that $\varphi_0$ is smooth in a neighborhood of 
$V_{\vec{\varepsilon}_k}$. 
We also note that 
$$
\lim_{\vec{\varepsilon}_k\searrow 0} \int _{V_{\vec{\varepsilon}_k}} 
\partial \varphi_0\wedge \bar\partial \eta=
\lim_{\vec{\varepsilon'}_k\searrow 0} \int _{V_{\vec{\varepsilon'}_k}} 
\partial \varphi_0\wedge \bar\partial \eta
$$ 
holds. 
The third one follows from Stokes' theorem and integration by parts. 
We obtain the fourth one by Lemmas \ref{f-lem4.9} and 
\ref{f-lem4.11}. 
Note that 
$$
\int _{V_{\vec{\varepsilon'}_k}} 
d(\partial \varphi_0\wedge \eta)=\int _{S_{\vec{\varepsilon'}_k}} 
\partial \varphi_0\wedge \eta
$$ 
by Stokes' theorem. 
The final one follows from \cite[Proposition 5.16 (i)]{kollar}. 
\end{proof}

\begin{lem}\label{f-lem4.13}
Let $\eta$ be a smooth 
$(2n-2)$-form on $\Delta_a^n$ with compact support. 
We assume that $\partial \varphi_0$ and $\bar\partial \partial \varphi_0$ are 
almost bounded on $\Delta^n_a$. 
Then 
$$
\int _{\Delta_a^n} \varphi\partial \bar\partial \eta=
\int _{\Delta_a^n}\partial \bar\partial \varphi_0\wedge \eta. 
$$
\end{lem}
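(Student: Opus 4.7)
The plan is to reduce Lemma \ref{f-lem4.13} to Lemma \ref{f-lem4.12} by pulling everything back along the proper bimeromorphic map $p: W \to \Delta_a^n$ furnished by the definition of almost boundedness (Definition \ref{f-def4.2}). Since $p$ is a biholomorphism outside a proper analytic subset $E \subset W$ of measure zero, ordinary change of variables applies both to the locally integrable function $\varphi$ and to the smooth form $\partial\bar\partial\varphi_0$ on $(\Delta_a^*)^n$, yielding
$$\int_{\Delta_a^n}\varphi\,\partial\bar\partial\eta=\int_W(\varphi\circ p)\,p^*(\partial\bar\partial\eta)\quad\text{and}\quad\int_{\Delta_a^n}\partial\bar\partial\varphi_0\wedge\eta=\int_W p^*(\partial\bar\partial\varphi_0)\wedge p^*\eta.$$
So it suffices to establish the corresponding identity on $W$.

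Since $p$ is proper, the support of $p^*\eta$ is compact, so I cover it by finitely many coordinate polydiscs $\Delta^n_{a'_j}$ centered at points $w_j\in W$ on which $p^*(\partial\varphi_0)$ and $p^*(\bar\partial\partial\varphi_0)$ are nearly bounded; such polydiscs exist by the definition of almost boundedness. I then fix a smooth partition of unity $\{\rho_j\}$ subordinate to this cover. For each $j$, the form $\rho_j\,p^*\eta$ is a smooth compactly supported $(2n-2)$-form on $\Delta^n_{a'_j}$, and $p^*\varphi_0$ plays the role of $\varphi_0$ in Lemma \ref{f-lem4.12} on that polydisc, so that lemma gives
$$\int_{\Delta^n_{a'_j}}(\varphi\circ p)\,\partial\bar\partial(\rho_j\,p^*\eta)=\int_{\Delta^n_{a'_j}}\partial\bar\partial(p^*\varphi_0)\wedge\rho_j\,p^*\eta.$$

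Summing over $j$, I use $\sum_j\rho_j\equiv 1$ on the support of $p^*\eta$, together with the vanishing there of $\sum_j\partial\rho_j$, $\sum_j\bar\partial\rho_j$, and $\sum_j\partial\bar\partial\rho_j$: the Leibniz-expansion contributions involving derivatives of $\rho_j$ telescope to zero, leaving $\sum_j\partial\bar\partial(\rho_j\,p^*\eta)=p^*(\partial\bar\partial\eta)$ on the left and $\sum_j\rho_j\,p^*\eta=p^*\eta$ on the right. This produces the desired identity on $W$, which descends to $\Delta_a^n$ by the change of variables above.

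The main obstacle is the bookkeeping around the upper semicontinuous extension. When Lemma \ref{f-lem4.12} is applied on $\Delta^n_{a'_j}$, its left-hand side strictly involves the smallest upper semicontinuous extension of $p^*\varphi_0$ computed intrinsically on that polydisc, rather than $\varphi\circ p$. One must check that these two functions agree almost everywhere, so that they contribute the same integral against the smooth form $\partial\bar\partial(\rho_j\,p^*\eta)$; this is the case because both coincide with $p^*\varphi_0$ on the dense open preimage of $X_0$ (which has full measure in $\Delta^n_{a'_j}$), and one verifies as in Lemma \ref{f-lem4.5} that each is locally integrable. Granting this routine verification, the argument above proves the lemma.
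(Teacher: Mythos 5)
Your proposal is correct and follows exactly the route the paper takes: the paper's own proof is the one-liner ``after taking some suitable blow-ups and a suitable partition of unity, we can apply Lemma~\ref{f-lem4.12},'' which you have expanded into the change-of-variables computation, the finite cover by polydiscs in $W$ on which the pulled-back forms are nearly bounded, the Leibniz/telescoping identity $\sum_j\partial\bar\partial(\rho_j p^*\eta)=p^*(\partial\bar\partial\eta)$, and the check that $\varphi\circ p$ agrees a.e.\ with the intrinsic upper semicontinuous extension of $p^*\varphi_0$. The only detail left tacit on both sides is that the log-log estimate on $\varphi_0$ (needed for Lemma~\ref{f-lem4.9} inside the proof of Lemma~\ref{f-lem4.12}) survives pullback by a composite of permissible blow-ups, which is routine since coordinate functions pull back to monomials times units.
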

\begin{proof}[Proof of Lemma \ref{f-lem4.13}]
By assumption, $\partial \varphi_0$ and 
$\bar\partial \partial \varphi_0$ are almost bounded on $\Delta^n_a$. 
Therefore, after taking some suitable blow-ups and 
a suitable partition of unity, we can apply Lemma \ref{f-lem4.12}. 
Then we obtain the desired equality. 
\end{proof}

\begin{lem}\label{f-lem4.14}
Let $\eta_1$ and $\eta_2$ be a smooth 
$(2n-2)$-form and a smooth 
$(2n-3)$-form on $X$ with 
compact support, respectively. 
Then 
\begin{equation}\label{f-eq45}
\int_X\sqrt{-1} \Theta_{h_0}(\mathscr L|_{X_0})\wedge \eta_1<\infty
\end{equation}
and 
\begin{equation}\label{f-eq46}
\int _X\sqrt{-1}\Theta_{h_0}(\mathscr L|_{X_0})\wedge d\eta_2=0. 
\end{equation}
Therefore, $\sqrt{-1}\Theta_{h_0}(\mathscr L|_{X_0})$ can be 
extended to a closed positive current $T$ on $X$ by improper 
integrals. 
We note that $\sqrt{-1}\Theta_{h_0}(\mathscr L|_{X_0})$ is a 
semipositive smooth $(1, 1)$-form on $X_0$ {\em{(}}see 
Lemma \ref{f-lem2.8}{\em{)}}. 
\end{lem}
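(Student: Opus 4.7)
The approach is to use the local decomposition $\sqrt{-1}\Theta_{h_0}(\mathscr L|_{X_0}) = \sqrt{-1}\Theta_g(\mathscr L) + 2\sqrt{-1}\partial\bar\partial\varphi_0$ on $X_0$, combined with a partition of unity and Lemma \ref{f-lem4.13}, to reduce everything to the local integrability of $\varphi$ proved in Lemma \ref{f-lem4.5} together with the formal identity $\partial\bar\partial d = 0$.

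First I would fix a locally finite open cover $\{U_\alpha\}$ of $X$ in which each $U_\alpha$ either sits inside $X_0$ or equals a polydisc $\Delta_{a_\alpha}^n$ centered at a point of $D$ on which Lemma \ref{f-lem4.7} applies, together with a subordinate partition of unity $\{\rho_\alpha\}$. Only finitely many $\rho_\alpha$ meet the supports of $\eta_1$ or $\eta_2$. For \eqref{f-eq45} I would write $\eta_1 = \sum_\alpha \rho_\alpha \eta_1$ and treat each term separately: if $U_\alpha \subset X_0$, the integrand is smooth with compact support; if $U_\alpha = \Delta_{a_\alpha}^n$, Lemma \ref{f-lem4.13} applied to the smooth compactly supported $(2n-2)$-form $\rho_\alpha\eta_1$ yields
$$\int_{\Delta_{a_\alpha}^n} \partial\bar\partial\varphi_0 \wedge \rho_\alpha \eta_1 = \int_{\Delta_{a_\alpha}^n} \varphi\,\partial\bar\partial(\rho_\alpha \eta_1),$$
whose right-hand side is finite because $\varphi\in L^1_{\mathrm{loc}}$ and $\partial\bar\partial(\rho_\alpha\eta_1)$ is smooth with compact support. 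Summing the pieces, \eqref{f-eq45} follows and one simultaneously obtains the global identity
$$T(\eta) = \int_X \sqrt{-1}\Theta_g(\mathscr L) \wedge \eta + 2\sqrt{-1}\int_X \varphi\,\partial\bar\partial\eta$$
for every smooth compactly supported $(2n-2)$-form $\eta$ on $X$, exhibiting $T$ as a well-defined current.

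For \eqref{f-eq46} I would substitute $\eta = d\eta_2$ into this identity. The first term vanishes by Stokes' theorem and the Bianchi identity $d\Theta_g = 0$ for the smooth metric $g$. The second term vanishes because $\partial\bar\partial d = 0$ as a differential operator on smooth forms: indeed, $\partial^2=\bar\partial^2=0$ and $\partial\bar\partial = -\bar\partial\partial$ give $\partial\bar\partial d = \partial\bar\partial(\partial+\bar\partial) = -\partial^2\bar\partial + 0 = 0$. Hence $T(d\eta_2) = 0$. Positivity of $T$ is automatic: by Lemma \ref{f-lem2.8}, $\sqrt{-1}\Theta_{h_0}$ is a semipositive smooth $(1,1)$-form on $X_0$, so for any nonnegative smooth $(n-1,n-1)$-form $\alpha$ with compact support each approximating integral $\int_{V_{\vec\varepsilon_k}}\sqrt{-1}\Theta_{h_0}\wedge\alpha$ is nonnegative, and passing to the limit gives $T(\alpha)\geq 0$.

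The main obstacle is really only the bookkeeping of the partition of unity: all the analytic substance has been absorbed into Lemma \ref{f-lem4.13}, which converts the improper curvature integral into an ordinary Lebesgue integral against the locally integrable weight $\varphi$. Once that conversion is in hand, $d$-closedness of $T$ reduces to the formal vanishing $\partial\bar\partial d = 0$, and positivity is inherited pointwise from the semipositivity on $X_0$.
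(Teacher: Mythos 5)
Your proof is correct, but it is organized differently from the paper's and the difference is worth noting. You route everything through Lemma \ref{f-lem4.13}: first convert the improper integral $\int \partial\bar\partial\varphi_0\wedge\eta$ into the ordinary Lebesgue integral $\int\varphi\,\partial\bar\partial\eta$ piece-by-piece via a partition of unity, then obtain \eqref{f-eq45} from local integrability of $\varphi$ (Lemma \ref{f-lem4.5}) and \eqref{f-eq46} from the formal identity $\partial\bar\partial\,d=0$ applied to the \emph{global} form $d\eta_2$ (it is essential to sum the partition-of-unity pieces back to the global $\eta=d\eta_2$ before invoking $\partial\bar\partial\,d=0$, since $\partial\bar\partial(\rho_\alpha d\eta_2)\neq 0$ in general; you do this correctly). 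The paper, by contrast, proves \eqref{f-eq45} directly from \cite[Proposition 5.16(i)]{kollar} applied to the nearly bounded $(2n)$-form $\partial\bar\partial\varphi_0\wedge\eta_1$, and proves \eqref{f-eq46} by Stokes' theorem on $V_{\vec\varepsilon_k}$ together with the vanishing of the boundary term guaranteed by Lemma \ref{f-lem4.11} — i.e.\ entirely without invoking $\varphi$ or Lemma \ref{f-lem4.13}. The paper then uses Lemma \ref{f-lem4.13} only \emph{afterward}, in the paragraph following Lemma \ref{f-lem4.14}, to identify $T$ with $\sqrt{-1}\Theta_h(\mathscr L)$ in \eqref{f-eq49}. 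What your route buys: it is shorter, you get the identification $T=\sqrt{-1}\Theta_h(\mathscr L)$ for free along the way, and the $d$-closedness reduces to a clean formal identity instead of another Stokes/boundary-limit argument. What it costs: Lemma \ref{f-lem4.14} now depends logically on Lemma \ref{f-lem4.13}, whereas the paper keeps them independent, so the two steps ``$T$ exists as a closed positive current'' and ``$T$ is the curvature current of $h$'' remain cleanly separated. Both ultimately rest on the same Koll\'ar estimates (\cite[Proposition 5.16]{kollar} via Lemma \ref{f-lem4.12}), so there is no gain in generality. Your explicit positivity argument (approximate by integrals over $V_{\vec\varepsilon_k}$, where the form is pointwise semipositive by Lemma \ref{f-lem2.8}) fills in a step the paper leaves implicit, which is fine.
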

\begin{proof}[Proof of Lemma \ref{f-lem4.14}] 
We note that 
$$
\sqrt{-1}\Theta_{h_0}(\mathscr L|_{X_0})=\sqrt{-1}\Theta_g(\mathscr L)|_{X_0} 
+2\sqrt{-1}\partial \overline\partial \varphi_0
$$ 
by definition and that $\sqrt{-1}\Theta_g(\mathscr L)$ is a $d$-closed smooth 
$(1, 1)$-form on $X$. 
Therefore, it is sufficient to prove that 
\begin{equation}\label{f-eq47}
\int _{\Delta^n_a}\sqrt{-1}\partial \overline \partial \varphi_0\wedge \eta_1<\infty
\end{equation} 
and 
\begin{equation}\label{f-eq48}
\int_{\Delta^n_a}\partial \overline\partial \varphi_0\wedge d\eta_2=0
\end{equation}
by taking some suitable partition of unity. 
We see that \eqref{f-eq47} and \eqref{f-eq48} follow from 
\cite[Corollary 5.17]{kollar} 
since $\partial \overline \partial \varphi_0$ is almost 
bounded on $\Delta^n_a$ (see Lemma \ref{f-lem4.7}). 
More precisely, by taking some suitable blow-ups and a suitable partition of 
unity, we can reduce the problems to the case where 
$\partial \overline\partial \varphi_0$ is nearly 
bounded on some polydisc $\Delta^n_a$. 
Then \eqref{f-eq47} follows from \cite[Proposition 5.16 (i)]{kollar}. 
By \cite[Proposition 5.16 (i)]{kollar}, 
integration by parts, Stokes' theorem, and Lemma \ref{f-lem4.11}, 
we can directly check that 
$$
\int _{\Delta^n_a}\partial \overline\partial \varphi_0\wedge d\eta_2=0
$$ 
as in the proof of Lemma \ref{f-lem4.12}.
\end{proof}
By Lemma \ref{f-lem4.13}, we can see that 
\begin{equation}\label{f-eq49}
\sqrt{-1}\Theta_h(\mathscr L)=\sqrt{-1} \Theta_g(\mathscr L) +
2\sqrt{-1}\partial 
\overline \partial \varphi
\end{equation}
coincides with $T$. 
Note that we took $\partial \overline\partial$ of $\varphi$ as a 
current in \eqref{f-eq49}. 
In particular, 
$$
\sqrt{-1}\Theta_h(\mathscr L)\geq 0, 
$$ 
that is, $\sqrt{-1}\Theta_h(\mathscr L)$ is a closed 
positive current on $X$. 
By Lemma \ref{f-lem2.5}, $\varphi$ is a quasi-plurisubharmonic 
function since $\varphi$ is the smallest upper semicontinuous 
function that extends $\varphi_0$ to $X$. 

Finally, we prove: 

\begin{lem}\label{f-lem4.15}
Let $\varphi$ be a 
quasi-plurisubharmonic function on $\Delta_a^n$ for some 
$0<a<e^{-1}$. 
Assume that there exist some positive integers $a_1, \cdots, a_n$ and 
a positive real number $C$ such that 
$$
-\varphi(z)\leq \log \left( C\left(-\log |z_1|\right)^{a_1} 
\cdots \left(-\log |z_n|\right)^{a_n}\right)
$$ 
holds for all $z\in (\Delta_a^*)^n$. 
Then the Lelong number of $\varphi$ at $0$ is zero. 
\end{lem}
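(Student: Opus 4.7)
My plan is to squeeze the ratio $\varphi(z)/\log|z|$ between two quantities that both tend to $0$ along a well-chosen sequence $z\to 0$, forcing the Lelong number $\nu(\varphi,0)=\liminf_{z\to 0}\varphi(z)/\log|z|$ to equal zero.

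First I would establish the easy inequality $\nu(\varphi,0)\ge 0$, which is the general fact that Lelong numbers of quasi-plurisubharmonic functions are nonnegative. The point is that $\varphi$, being quasi-plurisubharmonic on $\Delta^n_a$, is locally bounded above; after shrinking $a$, there is some $M$ with $\varphi\le M$ on $\Delta^n_a$. Since $\log|z|<0$ for $|z|<1$, division by $\log|z|$ reverses the inequality and yields $\varphi(z)/\log|z|\ge M/\log|z|\to 0$ as $z\to 0$.

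Next I would prove the nontrivial inequality $\nu(\varphi,0)\le 0$ by exhibiting a sequence $z_k\to 0$ along which $\varphi(z_k)/\log|z_k|\to 0$. Rewriting the hypothesis as the lower bound $\varphi(z)\ge -\log C-\sum_{i=1}^n a_i\log(-\log|z_i|)$ on $(\Delta^*_a)^n$, the symmetric shape of the right-hand side suggests taking the diagonal $z_k=(r_k,\ldots,r_k)$ with $r_k\searrow 0$. This collapses the $n$ summands $\log(-\log|z_i|)$ into a single multiple of $\log(-\log r_k)$, which is of size $\log s$ for $s=-\log r_k\to\infty$, whereas $|\log|z_k||$ is of size $s$ itself. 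Setting $A=\sum_i a_i$, and dividing the two-sided bound $-\log C-A\log(-\log r_k)\le \varphi(z_k)\le M$ by the negative quantity $\log|z_k|=\log r_k+\tfrac12\log n$, I would obtain the squeeze
$$
\frac{M}{\log|z_k|}\le \frac{\varphi(z_k)}{\log|z_k|}\le \frac{-\log C-A\log(-\log r_k)}{\log r_k+\tfrac12\log n},
$$
and both outer terms tend to $0$ by the elementary limit $\log s/s\to 0$.

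The argument is essentially routine and I do not anticipate any serious obstacle. The one thing requiring care is sign discipline when dividing inequalities by the negative quantity $\log|z|$. The strategic point is the diagonal choice $z_k=(r_k,\ldots,r_k)$, dictated by the symmetric form of the hypothesis: it is precisely the configuration producing the decisive gap between $O(\log s)$ in the numerator and $O(s)$ in the denominator.
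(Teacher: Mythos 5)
Your proposal is correct and follows essentially the same route as the paper: both sandwich $\nu(\varphi,0)=\liminf_{z\to 0}\varphi(z)/\log|z|$ between $0$ (the general nonnegativity of Lelong numbers, which the paper invokes from Definition \ref{f-def2.4}) and the upper bound coming from the hypothesis divided by $-\log|z|$, which tends to $0$ because $\log s/s\to 0$. The only difference is cosmetic: you make the diagonal test sequence $z_k=(r_k,\ldots,r_k)$ explicit and spell out the boundedness-above argument for $\nu\ge 0$, whereas the paper compresses everything into one chain of inequalities.
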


\begin{proof}
We denote the Lelong number of $\varphi$ at $x$ by $\nu(\varphi, x)$. 
We can easily see that 
$$
0\leq \nu(\varphi, 0)=\liminf_{z\to 0} \frac{\varphi (z)}{\log |z|}
\leq \liminf _{z\to 0} \frac{\log \left( C\left(-\log |z_1|\right)^{a_1} 
\cdots \left(-\log |z_n|\right)^{a_n}\right)}{-\log |z|}\leq 0
$$ 
holds. Therefore, the Lelong number $\nu(\varphi, 0)$ 
of $\varphi$ at $0$ is zero. 
\end{proof}

Thus we obtain Theorem \ref{f-thm1.1} by 
Lemma \ref{f-lem4.15}. 
\end{say}

Now Corollary \ref{f-cor1.2} is almost obvious by Theorem \ref{f-thm1.1}. 

\begin{proof}[Proof of Corollary \ref{f-cor1.2}]
We put $\pi:Y=\mathbb P_X(F^b)\to X$ and $Y_0=\pi^{-1}(X_0)$. 
We consider the variation of Hodge structure $\pi^*V_0$ on $Y_0$. 
Then $\pi^*F^b$ is the canonical extension of the lowest piece of the Hodge 
filtration. 
By applying Theorem \ref{f-thm1.1} to 
the natural map $\pi^*F^b\to \mathscr O_{\mathbb P_X(F^b)}(1)
\to 0$, 
we obtain a singular hermitian metric on $\mathscr O_{\mathbb P_X(F^b)}
(1)$ with 
the desired properties. 
\end{proof}

\section{Proof of 
Theorem \ref{f-thm1.5}}\label{f-sec5}

In this section, we will prove Theorem \ref{f-thm1.5} and 
Corollary \ref{f-cor1.6}. We will only explain how to modify 
the arguments in Section \ref{f-sec4}. 

\begin{say}[Proof of Theorem \ref{f-thm1.5}]\label{f-say5.1} 
Let $\{F^p_0\}$ be the Hodge filtration of the polarizable 
variation of $\mathbb R$-Hodge structure $V_0=(\mathbb V_0, 
F_0)$ on $X_0$. 
We put 
$$
0=F^{b+1}_0\subsetneq F^b_0\subseteq \cdots 
\subseteq F^{a+1}_0\subsetneq F^a_0=\mathscr V_0:=\mathbb V_0\otimes 
\mathscr O_{X_0}. 
$$ 
By assumption, $\mathscr M$ is a holomorphic 
line subbundle of $\bigoplus _p\Gr^p_F \mathscr V$. 
Therefore, $\mathscr M$ is naturally a holomorphic 
line subbundle of $\mathscr Q:=\bigoplus _{p=a+1}^{b+1} \mathscr V/F^p$. 
Then we have the following big commutative diagram of 
holomorphic vector bundles on $X$. 
$$
\xymatrix{&&&0\ar[d]&\\
&0\ar[d]&&\mathscr M\ar[d]&\\
0 \ar[r]& \mathscr S'\ar[d]
\ar[r]& \underset{\text{finite}}{\bigoplus} 
\mathscr V\ar[r]\ar@{=}[d]& \mathscr Q\ar[r]\ar[d]&0\\
0 \ar[r]& \mathscr S\ar[d]
\ar[r]& \underset{\text{finite}}{\bigoplus} 
\mathscr V\ar[r]& \mathscr Q'\ar[r]\ar[d]&0\\
&\mathscr M\ar[d]&&0&\\
&0&&&
}
$$ 
We note that $\mathscr S'=\bigoplus _{p=a+1}^{b+1}F^p$ and 
$\mathscr Q'=\mathscr Q/\mathscr M$ and 
that $\mathscr S$ is the kernel of the naturally induced surjection 
$\bigoplus _{\text{finite}}\mathscr V\to \mathscr Q'$. 
By taking the dual of the above 
commutative diagram, $\mathscr M^\vee$ is a quotient 
bundle of $\mathscr Q^\vee$ and 
$\mathscr Q^\vee$ is a subbundle of $\underset{\text{finite}}
{\bigoplus}\mathscr V^\vee$. 
Therefore, we can apply the same arguments as in Section \ref{f-sec4} to 
$\mathscr M^\vee$ by considering the polarizable variation of 
$\mathbb R$-Hodge structure $\underset{\text{finite}}{\bigoplus}
V^\vee_0$. 
Then we see that the Hodge metric of $\underset{\text{finite}}{\bigoplus} 
V^\vee_0$ induces the desired singular hermitian metric 
$h$ on $\mathscr M^\vee$ by Lemma \ref{f-lem2.9}. 
\end{say}

Finally, we give a proof of Corollary \ref{f-cor1.6}. 

\begin{proof}[Proof of Corollary \ref{f-cor1.6}]
We put $\pi:Y=\mathbb P_X(A^\vee)\to X$ and 
$Y_0=\pi^{-1}(X_0)$. 
We consider $\pi^*V_0$ on $Y_0$. 
Then $\pi^*A$ is contained in the 
kernel of the Higgs field 
$$
\pi^*\theta: \Gr_F^\bullet\pi^*\mathscr V\to 
\Omega^1_Y(\log \pi^*D)\otimes 
\Gr_F^\bullet\pi^*\mathscr V. 
$$
By applying Theorem \ref{f-thm1.5} to 
the line subbundle $\mathcal O_{\mathbb P_X(A^\vee)}(-1)$ 
of $\pi^*A$, we obtain the desired result. 
\end{proof}


\begin{thebibliography}{Kaw3}

\bibitem[B1]{brunebarbe} 
Y.~Brunebarbe, Symmetric differentials and variations of 
Hodge structures, to appear in J. Reine Angew. Math.

\bibitem[B2]{brunebarbe2} 
Y.~Brunebarbe, 
Semi-positivity from Higgs bundles, preprint (2017). 

\bibitem[CKS]{cks} 
E.~Cattani, A.~Kaplan, W.~Schmid, 
Degeneration of Hodge structures, 
Ann. of Math. (2) {\textbf{123}} (1986), no. 3, 457--535. 

\bibitem[D1]{demailly1} 
J.-P.~Demailly, Regularization of closed positive currents and intersection 
theory, J. Algebraic Geom. \textbf{1} (1992), no. 3, 361--409.

\bibitem[D2]{demailly} 
J.-P.~Demailly, {\em{Analytic methods in algebraic geometry}}, 
Surveys of Modern Mathematics, \textbf{1}. International 
Press, Somerville, MA; Higher Education Press, Beijing, 2012.

\bibitem[D3]{demailly3} 
J.-P.~Demailly, 
Applications of pluripotential theory to algebraic geometry, 
{\em{Pluripotential theory}, 143--263, Lecture 
Notes in Math., \textbf{2075}, Fond. CIME/CIME Found. Subser., Springer, 
Heidelberg, 2013.
}
\bibitem[FF]{fujino-fujisawa} 
O.~Fujino, T.~Fujisawa, 
Variations of mixed Hodge structure and semipositivity theorems, 
Publ. Res. Inst. Math. Sci. \textbf{50} (2014), no. 4, 589--661.

\bibitem[FFS]{fujino-fujisawa-saito} 
O.~Fujino, T.~Fujisawa, M.~Saito, 
Some remarks on the semipositivity theorems, 
Publ. Res. Inst. Math. Sci. \textbf{50} (2014), no. 1, 85--112.

\bibitem[Fuj]{fujisawa} 
T.~Fujisawa, A remark on semipositivity theorems, preprint (2017). 

\bibitem[GT]{griffiths-tu} 
P.~Griffiths, L.~Tu, 
Curvature properties of the Hodge bundles, 
{\em{Topics in transcendental algebraic 
geometry (Princeton, N.J., 1981/1982)}}, 29--49, 
Ann. of Math. Stud., \textbf{106}, Princeton Univ. Press, Princeton, NJ, 1984.

\bibitem[HPS]{hps} 
C.~Hacon, M.~Popa, C.~Schnell, 
Algebraic fiber spaces over abelian varieties:~around a 
recent theorem by Cao and P\u aun, 
to appear in the 
Contemporary Mathematics volume in honor of L. Ein's 60th birthday. 

\bibitem[Kas]{kashiwara} 
M.~Kashiwara, 
The asymptotic behavior of a variation of 
polarized Hodge structure, 
Publ. Res. Inst. Math. Sci. \textbf{21} (1985), no. 4, 853--875.

\bibitem[Kaw1]{kawamata1}
Y.~Kawamata, 
Characterization of abelian varieties, 
Compositio Math. \textbf{43} (1981), no. 2, 253--276.

\bibitem[Kaw2]{kawamata2} 
Y.~Kawamata, Kodaira dimension of certain algebraic fiber spaces, 
J. Fac. Sci. Univ. Tokyo Sect. IA Math. \textbf{30} (1983), no. 1, 1--24.

\bibitem[Kaw3]{kawamata3}
Y.~Kawamata, 
On effective non-vanishing and base-point-freeness, 
Kodaira's issue. 
Asian J. Math. \textbf{4} (2000), no. 1, 173--181. 

\bibitem[Ko]{kollar} 
J.~Koll\'ar, 
Subadditivity of the Kodaira dimension:~fibers of general type, 
{\em{Algebraic geometry, Sendai, 1985}}, 361--398, 
Adv. Stud. Pure Math., \textbf{10}, North-Holland, Amsterdam, 1987.

\bibitem[P\u aT]{paun-takayama} 
M.~P\u aun, S.~Takayama, 
Positivity of twisted pluricanonical bundles and their direct images, 
to appear in J. Algebraic Geom. 

\bibitem[PoS]{popa-schnell} 
M.~Popa, C.~Schnell, 
Viehweg's hyperbolicity conjecture for families with maximal variation, 
Invent. Math. \textbf{208} (2017), no. 3, 677--713. 

\bibitem[PoW]{popa-wu} 
M.~Popa, L.~Wu, 
Weak positivity for Hodge modules, 
Math. Res. Lett. \textbf{23} (2016), no. 4, 1137--1153.  

\bibitem[S]{schmid} 
W.~Schmid, Variation of Hodge structure: the 
singularities of the period mapping, 
Invent. Math. \textbf{22} (1973), 211--319.

\bibitem[VZ]{viehweg-zuo} 
E.~Viehweg, K.~Zuo, On the Brody hyperbolicity of moduli 
spaces for canonically polarized manifolds, 
Duke Math. J. \textbf{118} (2003), no. 1, 103--150. 

\bibitem[W]{wlo} J.~W\l odarczyk, 
Resolution of singularities of analytic spaces, 
Proceedings of G\"okova Geometry-Topology Conference 2008, 31--63, 
G\"okova Geometry/Topology Conference (GGT), G\"okova, 2009.

\bibitem[Z]{zuo} 
K.~Zuo, On the negativity of kernels of 
Kodaira--Spencer maps on Hodge bundles and applications, 
Kodaira's issue. 
Asian J. Math. \textbf{4} (2000), no. 1, 279--301. 
\end{thebibliography}
\end{document}